\numberwithin{equation}{section} 
\newtheorem{theorem}{Theorem}
\newtheorem{lemma}{Lemma}
\newcommand\dist{\mathrm{dist}}
\newcommand{\R}{\mathbb R}
\newcommand{\eps}{\varepsilon}
\newcommand{\dd}{\, \mathrm{d}}
\numberwithin{equation}{section}
\newcommand{\intav}[1]{\mathchoice {\mathop{\vrule width 6pt height 3 pt depth  -2.5pt
\kern -8pt \intop}\nolimits_{\kern -6pt#1}} {\mathop{\vrule width
5pt height 3  pt depth -2.6pt \kern -6pt \intop}\nolimits_{#1}}
{\mathop{\vrule width 5pt height 3 pt depth -2.6pt \kern -6pt
\intop}\nolimits_{#1}} {\mathop{\vrule width 5pt height 3 pt depth
-2.6pt \kern -6pt \intop}\nolimits_{#1}}}
\title[On the Bernoulli problem with unbounded jumps]{\bf On the Bernoulli problem with unbounded jumps}
\author{ Stanley Snelson \& Eduardo V. Teixeira }
\address{Department of Mathematical Sciences, Florida Institute of Technology, Melbourne, FL, USA}
\email{ssnelson@fit.edu}
\address{Department of Mathematics,  University of Central Florida, Orlando, FL, USA}
\email{eduardo.teixeira@ucf.edu}
\thanks{SS was partially supported by NSF grant DMS-2213407 and a Collaboration Grant from the Simons Foundation, Award \#855061.}
\begin{document}
\maketitle

\begin{abstract}  We investigate Bernoulli free boundary problems prescribing infinite jump conditions. The mathematical set-up leads to the analysis of non-differentiable minimization problems of the form $\int  \left(\nabla u\cdot (A(x)\nabla u) + \varphi(x) 1_{\{u>0\}}\right) \dd x \to \text{min}$, where $A(x)$ is an elliptic matrix with bounded, measurable coefficients and $\varphi$ is not necessarily locally bounded. We prove universal H\"older continuity of minimizers for the one- and two-phase problems. Sharp regularity estimates along the free boundary are also obtained. Furthermore, we perform a thorough analysis of the geometry of the free boundary around a point $\xi$ of infinite jump, $\xi \in \varphi^{-1}(\infty)$. We show that it is determined by the blow-up rate of $\varphi$ near $\xi$ and we obtain an analytical description of such cusp geometries.

\noindent \textbf{MSC(2010)}:  35B65; 35J60; 35J70.

\tableofcontents

\end{abstract}

\section{Introduction}

For a bounded domain $\Omega \subset \R^d$, $d\ge 2$, with Lipschitz boundary, let 
\[ M_{\lambda,\Lambda}(\Omega) := \left \{A: \Omega \to \R^{d\times d}, \, A_{ij} = A_{ji}, \lambda |e|^2 \leq A_{ij}(x) e_i e_j \leq \Lambda |e|^2 \text{ for all } x\in \Omega, e \in \R^d\right \},
\]
be the space of symmetric $d\times d$ matrix-valued functions on $\Omega$ with ellipticity constants $0< \lambda \le \Lambda$. 
For fixed $A\in M_{\lambda,\Lambda}(\Omega)$ and $\varphi\colon \Omega \to \mathbb{R}$, we are interested in minimizers of the energy functional
\begin{equation}\label{AC}
	\mathcal J_{A,\varphi}(u) := \int_\Omega \left(\nabla u\cdot (A(x)\nabla u) + \varphi(x) 1_{\{u>0\}}\right) \dd x,
\end{equation}
over $H^{1}_g(\Omega)$, for some boundary condition $g$ in the trace space $H^{1/2}(\partial\Omega)$. Hereafter,  $1_{\{u>0\}}$ denotes the indicator function of the set $\{u>0 \}$. 

The energy functional described in \eqref{AC} is part of a large class of free boundary models describing cavity or jet flows, and is also related to overdetermined Bernoulli-type problems. The analysis of such free boundary problems was launched by the epoch-marking works of Alt-Caffarelli \cite{AC} and Alt-Caffarelli-Friedman \cite{ACF}, and since then has promoted major knowledge leverage across pure and applied sciences,  viz. \cite{BT,  CJK, CF, KL1, KL2, KT, KT2, CSY, CKL, DePhM, DePhSV, F, V} to cite a few. 

The main key novelty in this work is that the function $\varphi$ is only required to belong to a weak $L^q$ space, and thus it may become unbounded.  More importantly, the minimal assumption $\varphi\in L^{q}_{\rm weak}(\Omega)$ leads to multiple free boundary geometries. That is, the condition $\varphi \in L^q_{\rm weak}$ sets a {\it maximum} blow-up rate near a generic point $\xi$, viz. $|x-\xi|^{-n/q}$; however it is not granted that $\varphi(x)$ blows up at the same rate for all free boundary points. In turn, the geometry of the free boundary is modulated by the blow-up rate of $\varphi$ around a free boundary point, which may change point-by-point. A decisive new approach we discuss here concerns precise analytical quantities that allow one to classify such free boundary geometries; see Theorem \ref{t:cusps} below.

The free boundary model investigated in this article should be thought of as the dual of the Stokes conjecture, as investigated in \cite{VW}. By allowing $\varphi(x)$ to vanish at a precise rate -- in the case of the Stokes conjecture, $\varphi(x,y) = -y$ and $(0,0)$ is a free boundary point -- one can offer a variational treatment of the Stokes conjecture. {See also the series \cite{M1, M2, MN} for a related setting in which $\varphi$ vanishes in part of the domain.}  In this article, we treat the complementary case, when the Bernoulli function, $\varphi$, is allowed to become infinite. 

We further comment that we do not impose any continuity condition on the coefficients $x\mapsto A(x)$. Our regularity results are of universal nature, and hence applicable to a plethora of other models, e.g. free transmission problems, homogenization issues, etc. 

Recall that weak solutions to the homogeneous elliptic equation
\begin{equation}\label{e:homogeneous}
	 \nabla \cdot (A(x) \nabla u) = 0, \quad x\in \Omega, A \in M_{\lambda,\Lambda}(\Omega),
 \end{equation}
are locally H\"older continuous, with universal estimates. This is the content of the celebrated De Giorgi-Nash-Moser regularity theory, \cite{DeG, Moser, Nash}. That is, solutions to equation  \eqref{e:homogeneous} satisfy the estimate 
\begin{equation}\label{e:homogeneous-holder}
	\|u\|_{C^{\alpha_0}(K)} \leq C \|u\|_{L^2(\Omega)}, \quad K \Subset \Omega,
\end{equation}
for some {\it maximal} H\"older exponent $\alpha_0\in (0,1)$, depending on  $d,\lambda$, and $\Lambda$, but not on $K$ or $\Omega$. Hereafter we say a constant is \emph{universal} if it depends only on $d$, $q$, $\lambda$, and $\Lambda$. 
The constant $C>1$ in \eqref{e:homogeneous-holder} depends on universal parameters, $ K$, and  $\Omega$, but it is independent of the solution $u$.

Let us now discuss the main results proven in this article. The first key observation is that local minimizers of \eqref{AC} should satisfy an elliptic PDE like \eqref{e:homogeneous} in each phase, $\{u> 0 \}$ and $\{u< 0 \}$. Hence, any (universal) regularity estimate for local minimizers of \eqref{AC} must conform to the maximum regularity imposed by \eqref{e:homogeneous-holder}. Our first main theorem yields local regularity estimates for minimizers in the H\"older space and captures the settlement described above, in a sharp fashion.

\begin{theorem}[{\bf H\"older estimate}]\label{t:main-holder}
Let $u$ be a minimizer of $\mathcal J_{A,\varphi}(\Omega)$ over $H^1_g(\Omega)$, with $g\in H^{1/2}(\partial \Omega)$ and 
$A\in M_{\lambda,\Lambda}(\Omega)$. For any $\Omega' \subseteq \Omega$, if $\varphi \in L^q_{\rm weak}(\Omega')$ for some $q > d/2$, then for any $\delta \in (0,\alpha_0)$, $u$ is locally $\alpha$-H\"older continuous in $\Omega'$, where 
\begin{equation}\label{alpha:thm Holder}
	\alpha := \begin{cases} 1-d/(2q), & \alpha_0 > 1-d/(2q),\\ \alpha_0-\delta, &\alpha_0 \leq 1-d/(2q).\end{cases}
\end{equation}
One also has the estimate
\[ \|u\|_{C^{\alpha}(K)} \leq C \|u\|_{L^2(\Omega')}, \quad K \subset \subset \Omega'.\]
The constant $C$ depends only on $d$, $\lambda$, $\Lambda$, $q$, $\delta$, $\|\varphi\|_{L^q_{\rm weak}(\Omega')}$, and $K$.
\end{theorem}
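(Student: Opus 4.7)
The plan is to prove a Campanato–Morrey decay estimate for $u$ by comparing it, in small balls, with its $A$-harmonic replacement, and then iterating. Fix $x_0 \in \Omega'$ and $B_r(x_0) \subset\subset \Omega'$, and let $v$ solve the Dirichlet problem $\nabla\cdot(A\nabla v) = 0$ in $B_r(x_0)$ with $v = u$ on $\partial B_r(x_0)$. Since $v$ is an admissible competitor in $\mathcal J_{A,\varphi}$, the minimality of $u$ together with the identity $\int_{B_r(x_0)} \nabla(u-v) \cdot A \nabla v \, dx = 0$ yields
\begin{equation*}
\lambda\int_{B_r(x_0)}|\nabla(u-v)|^2\, dx \;\leq\; \int_{B_r(x_0)}\varphi\bigl(1_{\{v>0\}} - 1_{\{u>0\}}\bigr)\, dx \;\leq\; \int_{B_r(x_0)}|\varphi|\, dx.
\end{equation*}
A direct layer-cake computation gives $\int_{B_r(x_0)} |\varphi|\, dx \leq C\,\|\varphi\|_{L^q_{\rm weak}(\Omega')}\,r^{d(1-1/q)}$, and combining this with Poincar\'e's inequality for $u-v \in H^1_0(B_r(x_0))$ produces $\int_{B_r(x_0)} |u-v|^2\, dx \leq C\, r^{d+2-d/q}$; the same chain holds with $q$ replaced by any admissible $p\in (d/2,q]$.

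Setting $\Phi(\rho) := \int_{B_\rho(x_0)}|u - u_{B_\rho}|^2\, dx$, the De Giorgi–Nash–Moser Campanato decay $\int_{B_\rho}|v-v_{B_\rho}|^2 \leq C(\rho/r)^{d+2\alpha_0}\int_{B_r}|v-v_{B_r}|^2$ applied to $v$, together with elementary triangle inequalities, produces the key recurrence
\begin{equation*}
\Phi(\rho) \;\leq\; C\left(\frac{\rho}{r}\right)^{d + 2\alpha_0}\Phi(r) + C\, r^{d+2-d/p}, \qquad 0 < \rho \leq r.
\end{equation*}
The standard iteration lemma (e.g.\ Giaquinta–Martinazzi, Lemma~5.13) now forks into two cases. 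If $\alpha_0 > 1 - d/(2q)$, one takes $p = q$; the two exponents are strictly ordered, the lemma delivers $\Phi(r) \leq C\, r^{d + 2(1 - d/(2q))}$ with no loss, and the Campanato characterization of H\"older spaces gives $u \in C^{1-d/(2q)}(K)$ for any $K \subset\subset \Omega'$. Otherwise, one selects $p$ in the (non-empty) range $\bigl[d/(2(1-\alpha_0+\delta)),\,d/(2(1-\alpha_0))\bigr)$ to restore strict ordering, and obtains $u \in C^{\alpha_0-\delta}(K)$ for any prescribed $\delta \in (0,\alpha_0)$. The dependence on $\|u\|_{L^2(\Omega')}$ on the right-hand side comes from initializing the iteration at a fixed scale $R_0 = R_0(K,\Omega')$, at which $\Phi(R_0) \lesssim \|u\|_{L^2(\Omega')}^2$.

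The delicate point is the interplay between the two competing decay exponents---the De Giorgi–Nash–Moser exponent $d+2\alpha_0$ and the data-driven exponent $d + 2 - d/p$---and the fact that the iteration lemma requires strict dominance of one over the other. This is the origin of the dichotomy \eqref{alpha:thm Holder} and of the $\delta$-loss in the subcritical regime, where $p$ must be taken strictly below $q$. Auxiliary care is needed in the sharp layer-cake bound for weak-$L^q$ (which permits $p = q$ in the first regime, avoiding any loss there) and in verifying that the triangle-inequality steps do not degrade the constants in a scale-dependent way.
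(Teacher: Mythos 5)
Your proof is correct, and it takes a genuinely different route from the paper's. The paper establishes the key one-step decay (Lemma~\ref{l:local-holder}) by a compactness/contradiction argument (Lemma~\ref{l:rho}): it assumes failure, extracts a sequence $u_n$ minimizing $\mathcal J_{A_n,\varphi_n,\gamma_n}$ with $\|\varphi_n\|_{L^q_{\rm weak}}\to 0$, passes to a weak $H^1$ limit using a Caccioppoli estimate, and identifies the limit as $A_0$-harmonic via the weak minimality variations \eqref{e:non-negative}; it then iterates via explicit rescaling (Lemma~\ref{l:global-holder}) and a Cauchy-sequence argument for the centers $\mu_k$. You instead perform a direct energy comparison with the $A$-harmonic replacement in each ball, use weak-$L^q$ layer-cake to control $\int_{B_r}\varphi$, and feed the result into a standard Campanato iteration lemma; the dichotomy in \eqref{alpha:thm Holder} and the $\delta$-loss then emerge from the strict-ordering hypothesis of that lemma rather than from a choice of iteration exponent as in the paper. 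The trade-offs are the usual ones: your argument is constructive, more elementary (no compactness, no Caccioppoli), and yields in-principle explicit constants, while the paper's compactness framework is more robust (for instance, it is re-deployed almost verbatim in Lemma~\ref{l:rho2} to prove the sharp one-phase decay of Section~\ref{s:improved}, where a direct harmonic-replacement argument would not give the exponent $1-d/(2q)$ exceeding $\alpha_0$) and avoids having to track the precise De Giorgi–Nash–Moser Campanato decay exponent for $v$. Interestingly, the paper does use your harmonic-replacement inequality \eqref{e:u-v} later, in the proof of Theorem~\ref{t:cusps}, so the two techniques coexist in the paper. One small remark on your second regime: rather than lowering $p$ below $q$ (which only weakens the estimate $\int_{B_r}|\varphi|\lesssim r^{d(1-1/p)}$), it is cleaner to keep $p=q$ and observe that, since $r^{d+2-d/q}\leq r^{d+2\alpha_0-2\delta}$ for $r\leq 1$ when $\alpha_0\leq 1-d/(2q)$, the iteration lemma applies with second exponent $d+2\alpha_0-2\delta$; both bookkeeping devices give the same conclusion.
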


Upon extra oscillation control of the function $x\mapsto A(x)$, higher regularity estimates for the homogeneous equation \eqref{e:homogeneous} become available, and one may take $\alpha_0 = 1$ in \eqref{alpha:thm Holder}. Choosing $\delta$ small enough, we then obtain $\alpha = 1-d/(2q)$ in Theorem \ref{t:main-holder}. In other words, if the coefficient matrix $A$ is ``continuous enough" as to allow Lipchitz estimates for the homogenous PDE \eqref{e:homogeneous}, then minimizers display the sharp H\"older regularity with exponent $1-d/(2q)$. See \cite{BM} for optimal conditions yielding Lipschitz regularity of solutions in great generality. 

We highlight that Theorem \ref{t:main-holder} is for the two-phase problem, i.e., it does not carry any sign restriction on $u$. 
Our next main result gives improved regularity for the one-phase problem. It says that {\it at free boundary points}, the sharp H\"older exponent $1-d/(2q)$ is achieved, regardless of the value of $\alpha_0$ in \eqref{e:homogeneous-holder}. 

\begin{theorem}[{\bf Improved regularity at free boundary points}]\label{t:improved}
With $u$, $g$, and $\varphi$ as in Theorem \ref{t:main-holder}, suppose the boundary data $g$, and therefore $u$, are nonnegative. Let $x_0$ be a boundary point of $\{u>0\}$, and suppose $\varphi \in L^q_{\rm weak}(\Omega')$ for some open $\Omega'\subseteq\Omega$ containing $x_0$.  Then $u \in C^{1-d/(2q)}$ at $x_0$, in the sense that 
$$
	\left | u(x) \right | \le C |x-x_0|^{1-d/(2q)} {\|u\|_{L^2(\Omega)}},
$$
for a constant $C>0$ that depends only on universal parameters, $\Omega'$, and $\Omega$.
\end{theorem}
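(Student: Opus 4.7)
The proof proceeds by compactness and contradiction, using a scaling tuned to the weak-$L^q$ structure of $\varphi$. Since $u \geq 0$ is continuous near $x_0$ (by Theorem \ref{t:main-holder}) and $x_0 \in \partial\{u>0\}$, we have $u(x_0) = 0$. A rescaling of Theorem \ref{t:main-holder} yields the local boundedness estimate $\sup_{B_{r/2}(x_0)} u \leq C\,\Phi(r)$, where $\Phi(r) := \bigl(\intav{B_r(x_0)} u^2\bigr)^{1/2}$. Consequently, it suffices to prove the $L^2$-averaged decay $\Phi(r) \leq C r^\sigma$ with $\sigma := 1 - d/(2q)$, since the pointwise bound then follows by taking $r = 2|x - x_0|$.

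Define $u_r(y) := r^{-\sigma} u(x_0 + r y)$. A direct computation shows that $u_r$ minimizes $\mathcal{J}_{A_r, \varphi_r}$ on $B_1$ (with its own boundary trace), where $A_r(y) := A(x_0 + ry) \in M_{\lambda,\Lambda}$ and $\varphi_r(y) := r^{d/q} \varphi(x_0 + r y)$. The exponent $\sigma$ is dictated by the requirement that both energy terms rescale by the same power of $r$; crucially, this makes the $L^q_{\rm weak}$-norm of $\varphi_r$ on $B_1$ equal to $\|\varphi\|_{L^q_{\rm weak}(B_r(x_0))} \leq \|\varphi\|_{L^q_{\rm weak}(\Omega')}$, so the class of admissible obstacles is preserved.

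Suppose $\Phi(r)/r^\sigma$ is unbounded as $r \to 0^+$. A maximality selection---picking $r_k \to 0$ so that $\Phi(\rho)/\rho^{\sigma + \epsilon}$ is nearly maximized on $(0, 2r_k]$ for a fixed small $\epsilon > 0$---produces a sequence with $\Phi(r_k)/r_k^\sigma \to \infty$ \emph{and} the doubling estimate $\Phi(2 r_k) \leq C_0 \Phi(r_k)$. Set $w_k(y) := u(x_0 + r_k y)/\Phi(r_k)$; this $w_k$ is a minimizer of $\mathcal{J}_{A_{r_k}, \tilde{\varphi}_k}$ where $\tilde{\varphi}_k := (r_k^\sigma/\Phi(r_k))^2\,\varphi_{r_k}$ satisfies $\|\tilde{\varphi}_k\|_{L^q_{\rm weak}(B_1)} \to 0$, and it obeys $\|w_k\|_{L^2(B_1)} = 1$, $\|w_k\|_{L^2(B_2)} \leq C$, $w_k \geq 0$, $w_k(0) = 0$. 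Theorem \ref{t:main-holder} gives uniform H\"older continuity of $\{w_k\}$ on compact subsets of $B_2$, so along a subsequence $w_k \to w_\infty$ locally uniformly; invoking the Murat--Tartar--Spagnolo $H$-convergence theorem, a further subsequence of $A_{r_k}$ $H$-converges to some $A_\infty \in M_{\lambda,\Lambda}$. Since $\tilde{\varphi}_k \to 0$ in $L^1_{\rm loc}$, passing to the limit in the comparison of $w_k$ with its own $A_{r_k}$-harmonic replacement identifies $w_\infty$ as $A_\infty$-harmonic in $B_1$. The limit is nonnegative, vanishes at the origin, and has unit $L^2(B_1)$-norm---contradicting the Harnack inequality of De Giorgi--Nash--Moser, which forces a nonnegative $A_\infty$-harmonic function vanishing at an interior point to vanish identically. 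The main technical subtlety is guaranteeing that the limit $w_\infty$ retains positive $L^2$-mass on $B_1$ and does not concentrate near $\partial B_1$; this is precisely what the doubling choice of $r_k$ secures. A secondary point is passing to the limit in the Euler--Lagrange equation without any continuity assumption on $A$, which is handled via $H$-convergence rather than ordinary weak convergence of coefficients.
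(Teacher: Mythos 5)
Your proof is correct but takes a genuinely different route from the paper's. The paper first proves a one-step improvement lemma (Lemma \ref{l:rho2}): there is $\eps>0$ such that if $\|\varphi\|_{L^q_{\rm weak}(B_1)}\le\eps$, $\fint_{B_1}u^2\le1$, $u\ge0$, and $u(0)=0$, then $\sup_{B_{1/10}}u\le1/10$; it then iterates at scales $10^{-k}$, using the scaling identity $\|\varphi_k\|_{L^q_{\rm weak}(B_1)}\le\|\varphi\|_{L^q_{\rm weak}(B_{10^{-k}})}$ (exact precisely when $\alpha=1-d/(2q)$) to conclude $\sup_{B_{10^{-k}}}u\le10^{-k\alpha}$ inductively. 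You instead run a single blow-up by contradiction: if $\Phi(r)/r^\sigma$ were unbounded, a doubling selection produces rescalings $w_k$ whose obstacles $\tilde\varphi_k\to0$, which (via the uniform interior H\"older bound from Theorem \ref{t:main-holder}) converge locally uniformly to a nonnegative, nontrivial $A_\infty$-harmonic limit vanishing at the origin, contradicting the strong maximum principle. Both arguments rest on the same compactness core and terminate in the same contradiction; yours trades the Campanato-style iteration for a doubling selection. Two remarks. First, the doubling step is simpler than you present: since $\Phi$ is continuous and positive, taking $r_k$ to realize $\sup_{\rho\in[1/k,R_0]}\Phi(\rho)/\rho^\sigma$ gives $r_k\to 0$, $\Phi(r_k)/r_k^\sigma\to\infty$, and $\Phi(2r_k)\le 2^\sigma\Phi(r_k)$ directly from the definition of the supremum, with no need for the auxiliary exponent $\eps$. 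Second, your appeal to $H$-convergence is the right call and not merely a stylistic alternative: since $\nabla w_k$ and $A_{r_k}$ both converge only weakly, the bilinear pairing $\int\nabla\phi\cdot(A_{r_k}\nabla w_k)$ does not pass to the limit by naive weak--weak convergence, and the harmonic-replacement comparison you sketch (minimality plus $\int\tilde\varphi_k\to0$ forces $\|w_k-h_k\|_{H^1_0}\to0$, then apply $H$-compactness to the $A_{r_k}$-harmonic replacements $h_k$) is a clean and robust way to identify the limit equation; the paper's compactness lemma (Lemma \ref{l:rho}, which feeds into Lemma \ref{l:rho2}) treats this limit more loosely via weak $L^2$ convergence of the coefficients, and that passage would itself be tightened by the $H$-convergence argument you employ.
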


We also obtain nondegeneracy estimates that say $u$ must grow at least at a certain rate near free boundary points. The asymptotics of these lower bounds are determined by the local blowup behavior of $\varphi$---more specifically, if $\varphi$ satisfies an average lower bound like
\begin{equation*}
|\{\varphi >t\}\cap B_r(x_0)| \geq \min\left( c t^{-p}, |B_r(x_0)|\right),
\end{equation*}
 for some $c>0$, all $t>0$, and sufficiently small $r>0$, then $u$ satisfies
\begin{equation}\label{e:nondeg}
|u(x)|\geq c|x-x_0|^{1-d/(2p)},
\end{equation}
for $x$ near $x_0$. See Lemma \ref{l:nondegeneracy} for a more precise (and more general) statement. In particular, this lower bound implies $|\nabla u|\to \infty$ as $x\to x_0$ from inside $\{u>0\}$. Note that for $\varphi \in L^q_{\rm weak}$, the exponent $p\le q$ may be strictly smaller than $q$, and this causes extra subtleties in  our analysis. 

Next, we address the geometry of the free boundary $\partial \{u>0\}$. The key point is to describe how the rate at which $\varphi$ blows up near a free boundary point impacts the free boundary configuration.

\begin{theorem}[{\bf Control on the severity of cusps in the free boundary}]\label{t:cusps}
Let $u$ be a minimizer of $\mathcal J_{A,\varphi}$ over $H^1_g(\Omega)$ as above, with $g\geq 0$. 
Let $x_0 \in \partial \{u>0\}$, and assume that for some $r_0>0$, $q_+> d/2$, and $q_-\leq q_+$, 
\begin{equation}\label{e:q-+}
\begin{split} 
&   \varphi \in L^{q_+}(B_{r_0}(x_0)), \quad \text{ and } \\
 &   |\{\varphi(x) > t\} \cap B_r(x_0)| \geq \min\left( c_0 t^{-q_-}, |B_r(x_0)|\right) \quad \text{ for all } r\in (0,r_0), t>0.
\end{split}
\end{equation}
Then, with $\alpha_1 = 1-d/(2q_+)$, $\alpha_2 = 1-d/(2q_-)$, there exists a constant $c>0$ such that for all $r\in (0,r_0)$,
\[ 
\frac {|\{u>0\} \cap B_r(x_0)|} { |B_r(x_0)|^{\alpha_2/\alpha_1}} \geq c, \quad \text{and} \quad \frac {|\{u=0\} \cap B_r(x_0)|} { |B_r(x_0)|^{P(q_-,q_+)}} \geq c,
\]
where 
\[
P(q_-,q_+) = \left( \frac {\alpha_2}{\alpha_1} - \frac 1 {q_-}\right) \frac{q_+}{q_+-1}.
\]
In particular, if $\varphi$ is such that $q_- = q_+$ in \eqref{e:q-+}, 
then there exists $c\in (0,1)$ such that
\[ c \leq \frac {|\{u>0\} \cap B_r(x_0)|} { |B_r(x_0)|} \leq 1-c,\]
for all $r\in (0,r_0)$.
\end{theorem}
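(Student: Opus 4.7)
The plan is to combine Theorem~\ref{t:improved} (upper bound on $u$) with the nondegeneracy estimate of Lemma~\ref{l:nondegeneracy}, used in its sup form $\sup_{B_r(x_0)} u \geq cr^{\alpha_2}$, and two different energy-comparison arguments.

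For the density of $\{u > 0\}$: first use nondegeneracy at scale $r$ to pick $y_r \in B_r(x_0)$ with $u(y_r) \geq cr^{\alpha_2}$, and then invoke Theorem~\ref{t:improved} to bound $u(y_r) \leq C|y_r - x_0|^{\alpha_1}$, which forces $|y_r - x_0| \geq c'r^{\alpha_2/\alpha_1}$. Setting $\rho = |y_r - x_0|/2$, Theorem~\ref{t:improved} applied on $B_\rho(y_r) \subset B_{2r}(x_0)$ yields $u \leq C\rho^{\alpha_1}$, of size comparable to $r^{\alpha_2}$. Since $\varphi \geq 0$ makes $u$ a nonnegative $A$-subsolution, Moser's weak mean value inequality gives $cr^{\alpha_2} \leq u(y_r) \leq C_M |B_\rho(y_r)|^{-1}\int_{B_\rho(y_r)} u$; combined with the pointwise upper bound, this forces $|\{u>0\} \cap B_\rho(y_r)| \gtrsim \rho^d \sim r^{d\alpha_2/\alpha_1}$. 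Since $B_\rho(y_r) \subset B_{2r}(x_0)$, rescaling gives the stated bound.

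For the density of $\{u = 0\}$: let $v$ be the $A$-harmonic extension of $u$ in $B_r(x_0)$. By comparison, $v \geq u \geq 0$ in $B_r$. Using $v$ as competitor in $\mathcal{J}_{A,\varphi}$ together with the orthogonality $\int \nabla(u-v) \cdot A\nabla v = 0$ gives the fundamental energy inequality
\[
\lambda \int_{B_r} |\nabla(u-v)|^2 \, dx \leq \int_{\{u=0\} \cap B_r} \varphi \, dx.
\]
A complementary cutoff competitor $\widetilde v = u\psi$ with $\psi$ vanishing on $B_{r/2}$ and equal to $1$ outside $B_r$, combined with a Caccioppoli estimate and the $C^{\alpha_1}$ bound of Theorem~\ref{t:improved}, yields
\[
\int_{\{u>0\} \cap B_{r/2}} \varphi \, dx \leq C r^{d - d/q_+}.
\]
A layer-cake argument based on the distribution lower bound \eqref{e:q-+} gives $\int_{B_{r/2}} \varphi \, dx \geq c r^{d - d/q_-}$; for small $r$ the first term dominates and a Hölder estimate $\int_{\{u=0\}}\varphi \leq \|\varphi\|_{L^{q_+}} V_{r/2}^{(q_+-1)/q_+}$, with $V_{r/2}:=|\{u=0\}\cap B_{r/2}|$, produces a first lower bound on $V_{r/2}$.

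To reach the sharp exponent $dP(q_-,q_+)$, I would combine the two bounds above more carefully. Harnack applied to the nonnegative $A$-harmonic $v$, starting from $v(y_r) \geq u(y_r) \geq cr^{\alpha_2}$ with $y_r \in B_{r/4}$ located by nondegeneracy at that scale, yields $v \geq c r^{\alpha_2}$ on a fixed sub-ball. Since $u-v = -v$ on $\{u=0\}$, Poincaré's inequality on $u-v$ (which vanishes on $\partial B_r$) converts the fundamental energy inequality into $r^{2\alpha_2} V_{r/4} \leq Cr^2 \int_{\{u=0\} \cap B_r} \varphi$. Balancing this relation against the cutoff/layer-cake estimates for $\int_{\{u=0\} \cap B_r}\varphi$ and solving for $V_{r/2}$ produces the exponent $dP$, in which the factor $\alpha_2/\alpha_1 < 1$ enters through the scale $|y_r-x_0|\geq c'r^{\alpha_2/\alpha_1}$ already identified in Part~1. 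The special case $q_-=q_+$ then specializes to $P=\alpha_2/\alpha_1=1$, yielding the two-sided density asserted in the final sentence of the theorem. The main obstacle is this last balancing step: the naive combination of layer-cake and Hölder alone yields only the weaker exponent $d(q_--1)q_+/(q_-(q_+-1))$, which matches $dP$ exactly in the symmetric case $q_-=q_+$ but is strictly coarser otherwise, so the factor $\alpha_2/\alpha_1$ must be fed back in through the harmonic-extension energy inequality rather than recovered from Hölder alone.
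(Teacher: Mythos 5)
Your proposal for the first density bound is correct and takes a genuinely different route. Where the paper picks $x_1\in\partial B_r(x_0)$ with $u(x_1)\gtrsim r^{\alpha_2}$ and then uses the interior H\"older modulus from Theorem~\ref{t:main-holder} to conclude $B_{\kappa\rho}(x_1)\subset\{u>0\}$ directly by the triangle inequality, you instead combine the $C^{\alpha_1}$ upper bound at $x_0$ with Moser's weak mean-value inequality for nonnegative subsolutions. Tracing the exponents, $\sup_{B_\rho(y_r)}u\lesssim \rho^{\alpha_1}$ together with $u(y_r)\lesssim\fint_{B_\rho(y_r)}u$ gives $|\{u>0\}\cap B_\rho(y_r)|\gtrsim r^{\alpha_2}\rho^{d-\alpha_1}$, and since $\rho=|y_r-x_0|/2\gtrsim r^{\alpha_2/\alpha_1}$ the exponent telescopes to $d\alpha_2/\alpha_1$. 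This works, though it imports Moser's inequality where the paper uses only the H\"older continuity already established. (A small wrinkle you should not glide over: Lemma~\ref{l:nondegeneracy} gives a lower bound only at points $x$ with $|x-x_0|=\dist(x,\partial\{u>0\})$, so the ``sup'' form $\sup_{B_r(x_0)}u\gtrsim r^{\alpha_2}$ needs a short argument, which both you and the paper use implicitly.)

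The second part has a genuine gap. After arriving at the correct energy/Poincar\'e inequality
\[
\frac1{r^2}\int_{B_r}|u-v|^2\,\dd x\;\lesssim\;\|\varphi\|_{L^{q_+}}\,|\{u=0\}\cap B_r|^{1-1/q_+},
\]
the decisive step is to lower-bound the left-hand side in terms of $r$ alone. The paper does this by bounding $v-u$ from below pointwise on the \emph{full} small ball $B_{\kappa\rho}(x_0)$ with $\rho=r^{\alpha_2/\alpha_1}$: Harnack plus nondegeneracy give $v\gtrsim r^{\alpha_2}$ there, and the sharp boundary-H\"older estimate forces $u\lesssim(\kappa\rho)^{\alpha_1}=\kappa^{\alpha_1}r^{\alpha_2}$ there, so $v-u\gtrsim r^{\alpha_2}$ and $\int_{B_r}|u-v|^2\gtrsim r^{2\alpha_2}\rho^d$. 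This is how the factor $\alpha_2/\alpha_1$ enters, and it produces exactly the exponent $dP(q_-,q_+)$.

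Your version instead restricts the lower bound to $\{u=0\}\cap B_{r/4}$, using that $|u-v|=v$ there, which yields $\int_{B_r}|u-v|^2\gtrsim r^{2\alpha_2}V_{r/4}$ with $V_{r/4}=|\{u=0\}\cap B_{r/4}|$. But $V_{r/4}$ is precisely the quantity you are trying to bound from below, so the resulting inequality $r^{2\alpha_2}V_{r/4}\lesssim r^2\int_{\{u=0\}\cap B_r}\varphi$ gives at best an \emph{upper} bound on $V_{r/4}$ in terms of $V_r$, not a lower bound. The auxiliary cutoff-competitor and layer-cake estimates you introduce ($\int_{\{u>0\}\cap B_{r/2}}\varphi\lesssim r^{d-d/q_+}$ and $\int_{B_{r/2}}\varphi\gtrsim r^{d-d/q_-}$) are not used in the paper at all, and combined with H\"older they reproduce only the ``naive'' exponent $d(1-1/q_-)q_+/(q_+-1)$, not $dP$. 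Your acknowledgment that ``the factor $\alpha_2/\alpha_1$ must be fed back in through the harmonic-extension energy inequality'' correctly identifies what is missing, but the proposed balancing/iteration does not achieve it: iterating the relation between $V_{r/4}$ and $V_r$ does not converge to the exponent $dP$. What is needed is to restrict the integral to a ball $B_{\kappa\rho}(x_0)$ of radius a fixed fraction of $\rho=r^{\alpha_2/\alpha_1}$ where $v-u$ is pointwise large, exactly because the H\"older bound keeps $u$ below $\tfrac12\,cr^{\alpha_2}$ there; this replaces the unknown measure $V_{r/4}$ with the explicit quantity $\rho^d$ and avoids the circularity.
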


{On the other hand, nondegeneracy estimates (see \eqref{e:nondeg} or Lemma \ref{l:nondegeneracy}) imply some limits on the regularity of $\partial\{u>0\}$ via classical potential theory. Since $u$ solves \eqref{e:homogeneous} in $\{u>0\}$ and is not Lipschitz at a boundary point $x_0$ where \eqref{e:nondeg} holds, we conclude $\{u>0\}$ does not satisfy an exterior sphere condition and in particular cannot be $C^2$ at $x_0$. Even further, the exponent $1-d/(2p)$ in \eqref{e:nondeg} implies some upper limit (depending on $d$, $p$, $\lambda$, and $\Lambda$) on the aperture of any exterior cone at $x_0$. 
}

 The rest of the paper is organized as follows. In Section \ref{sct pre} we discuss some preliminary results needed for the proofs of the main Theorems. In particular we discuss the scaling feature of the problem and establish a Caccioppoli-type estimate. In Section \ref{sct Holder} we prove Theorem \ref{t:main-holder}, by means of a careful approximation analysis. In the intermediary Section \ref{sct borderline} we discuss the limiting case when $\varphi \in L^{d/2}_{\text{weak}}$, and obtain universal BMO estimates. In Section \ref{s:improved} we establish the sharp $C^{1-d/(2q)}$ regularity at one-phase free boundary points. The interesting feature here is that such an estimate is not limited by the universal regularity theory of $A$-harmonic functions. Section \ref{sct geometry} is devoted to the geometric analysis of the free boundary, where we prove Theorem \ref{t:cusps}. Finally, in Section \ref{sct example} we discuss an example showing the free boundary may indeed intersect the infinite points of the Bernoulli function $\varphi$ in a non-trivial subregion of the domain.

\section{Preliminaries}\label{sct pre}

We begin by recalling that for $q\geq 1$, a function $f$ lies in the space $L^q_{\rm weak}(\Omega)$ if 
\[ 
	\|f\|_{L^q_{\rm weak}(\Omega)} := \sup_{t>0} \, t \, |\{x\in \Omega : f(x) > t\}|^{1/q} < \infty.
\]

Note that existence of minimizers for $\mathcal J_{A,\varphi}$ over $H^1_g(\Omega)$ follows as in \cite{AC}, so we omit the details.

Next, let us investigate how the minimization problem for $\mathcal J_{A,\varphi}$ transforms under translation and rescaling around a point. For $\gamma\in \R$, define the more general functional 
\[ 
\mathcal J_{A,\varphi,\gamma}(u) := \int_{\Omega} \left( \nabla u\cdot (A(x)\nabla u) + \varphi(x) 1_{\{u>\gamma\}}\right)\dd x.
\]
Clearly, $\mathcal J_{A,\varphi} = \mathcal J_{A,\varphi,0}$. We suppress the dependence of $\mathcal J_{A,\varphi,\gamma}$ on the domain $\Omega$, which will always be clear from context.
\begin{lemma}\label{l:local-min}
For some domain $\Omega$, $\varphi\in L^q_{\rm weak}(\Omega)$, and boundary data $g \in H^{1/2}(\partial \Omega)$, let $u$ be a minimizer of $\mathcal J_{A,\varphi}$ over $H^1_g(\Omega)$. Then, for any $x_0\in \Omega$, $\kappa \geq 0$, $0<r<\mathrm{dist}(x_0,\partial \Omega)$, and $\gamma\in \R$, 
the function 
\[
v(x):= \kappa (u(x_0+rx)- \gamma)
\] 
is a minimizer of $\mathcal J_{\tilde A,\tilde \varphi, \tilde \gamma}$ over $H^1_{\tilde g}(B_1)$, with 
\[ 
\begin{split}
\tilde \varphi(x) &= \kappa^2 r^2 \varphi(x_0+rx),\\
\tilde \gamma &= -\kappa \gamma,\\
\tilde A(x) &= A(x_0+rx),\\
\tilde g(x) &= \kappa (u(x_0 + rx)-\gamma), \, \,\, x\in \partial B_1.
\end{split}
\]
\end{lemma}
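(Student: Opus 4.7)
The plan is a direct change-of-variables argument: I show that the affine rescaling $x \mapsto x_0 + rx$ combined with the adjustment $w \mapsto \kappa^{-1}w + \gamma$ gives a bijection between the two admissible classes $H^1_{\tilde g}(B_1)$ and $\{\hat u \in H^1_g(\Omega) : \hat u = u \text{ outside } B_r(x_0)\}$, and transforms the two energies by the same positive multiplicative factor, so that minimality is preserved. The trivial case $\kappa = 0$ is immediate, since then $v \equiv 0$, $\tilde \gamma = 0$, $\tilde \varphi \equiv 0$, $\tilde g \equiv 0$, and $\mathcal J_{\tilde A,0,0}(w) = \int_{B_1} \nabla w \cdot \tilde A \nabla w \, \dd x \geq 0 = \mathcal J_{\tilde A,0,0}(v)$ for every $w \in H^1_0(B_1)$.

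For $\kappa > 0$, given any competitor $w \in H^1_{\tilde g}(B_1)$, I would set $\hat u(y) := \kappa^{-1} w\bigl((y-x_0)/r\bigr) + \gamma$ on $B_r(x_0)$ and $\hat u(y) := u(y)$ on $\Omega \setminus B_r(x_0)$. Because $w$ and $v$ share the trace $\tilde g$ on $\partial B_1$, this $\hat u$ lies in $H^1_g(\Omega)$ and agrees with $u$ on $\Omega \setminus B_r(x_0)$, so the minimality of $u$, after cancellation of the contributions outside $B_r(x_0)$, yields
\[
\int_{B_r(x_0)} \bigl[\nabla u \cdot A \nabla u + \varphi\, 1_{\{u>0\}}\bigr] \dd y \; \leq \; \int_{B_r(x_0)} \bigl[\nabla \hat u \cdot A \nabla \hat u + \varphi\, 1_{\{\hat u>0\}}\bigr] \dd y.
\]

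Finally I would apply the substitution $y = x_0+rx$, $\dd y = r^d \dd x$. The chain rule gives $\nabla_y u(y) = (\kappa r)^{-1}\nabla_x v(x)$ and analogously for $\hat u$ and $w$, while $\{u>0\}\cap B_r(x_0)$ pulls back to $\{v > -\kappa\gamma\}\cap B_1 = \{v>\tilde\gamma\}\cap B_1$, which is precisely what forces the threshold $\tilde \gamma := -\kappa \gamma$. Each side of the inequality above then becomes $\kappa^{-2} r^{d-2}$ times the Dirichlet term in $B_1$ plus $r^d$ times the Bernoulli term in $B_1$; multiplying through by the positive constant $\kappa^{2} r^{2-d}$ merges these two factors into the single prefactor $\kappa^{2} r^{2}$ multiplying $\varphi(x_0+rx)$, which is exactly the claimed $\tilde \varphi$. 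The resulting inequality reads $\mathcal J_{\tilde A,\tilde\varphi,\tilde\gamma}(v) \leq \mathcal J_{\tilde A,\tilde\varphi,\tilde\gamma}(w)$, establishing the minimality of $v$.

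Overall there is no genuine analytical obstacle; the work is entirely in bookkeeping. The only point that must be handled carefully is the interaction between the additive shift by $\gamma$ and the super-level set $\{u>0\}$, which is exactly what motivates the introduction of the three-parameter family $\mathcal J_{A,\varphi,\gamma}$ in place of the original $\mathcal J_{A,\varphi}$: without the extra parameter $\tilde\gamma$, the class of functionals would fail to be closed under the natural scaling of the problem.
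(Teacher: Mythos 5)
Your argument is correct and is precisely the ``direct calculation'' the paper invokes without writing out: you pull a competitor on $B_1$ back to $B_r(x_0)$, glue it to $u$ outside that ball, apply minimality of $u$, cancel the contributions from $\Omega\setminus B_r(x_0)$, and then change variables $y = x_0 + rx$ to see that both sides transform by the common positive factor $\kappa^{-2}r^{d-2}$ (for the Dirichlet part) and $r^d$ (for the Bernoulli part), which after renormalizing gives exactly the stated $\tilde A$, $\tilde\varphi$, $\tilde\gamma$. The treatment of the degenerate case $\kappa=0$ and the observation that the extra parameter $\gamma$ is forced on you by the additive shift are both correct and worth keeping in a written-out version.
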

\begin{proof}
Direct calculation.
\end{proof}

Next, by a standard argument, we can show that minimizers of $\mathcal J_{A,\varphi,\gamma}$ are subsolutions of the homogeneous equation \eqref{e:homogeneous}:
\begin{lemma}\label{l:subsolution}
Let $u$ be a minimizer of $\mathcal J_{A,\varphi,\gamma}$ over 
\[ H^1_{g}(\Omega) :=  \{w \in H^1(\Omega), w = g \text{ on } \partial \Omega \text{ in trace sense}\}, \]
for some $g\in H^{1/2}(\partial\Omega)$. Then 
\[ \int_{\Omega} (A(x)\nabla u) \cdot \nabla v \geq 0, \quad  v\in C_0^\infty(\Omega), v\geq 0.\]
\end{lemma}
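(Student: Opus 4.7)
The plan is the classical first-variation argument for Alt--Caffarelli-type functionals. Given $v\in C_0^\infty(\Omega)$ with $v\ge 0$ and $\varepsilon>0$ small, I would test minimality of $u$ against the admissible competitor $w_\varepsilon := u - \varepsilon v\in H^1_g(\Omega)$ (admissible since $v$ vanishes on $\partial\Omega$, so $w_\varepsilon$ has the same trace $g$). Minimality immediately yields $\mathcal J_{A,\varphi,\gamma}(w_\varepsilon)\ge \mathcal J_{A,\varphi,\gamma}(u)$, and the plan is to expand this difference in two pieces and pass to the first-order limit $\varepsilon\to 0^+$.

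The quadratic Dirichlet part expands via the symmetry of $A$ as
\[
\int_\Omega\nabla w_\varepsilon\cdot A\nabla w_\varepsilon\,\dd x - \int_\Omega\nabla u\cdot A\nabla u\,\dd x = -2\varepsilon\int_\Omega A\nabla u\cdot\nabla v\,\dd x + \varepsilon^2\int_\Omega A\nabla v\cdot\nabla v\,\dd x.
\]
For the non-smooth potential part, the one-sided constraint $v\ge 0$ forces the superlevel set to shrink: $\{w_\varepsilon > \gamma\} = \{u > \gamma + \varepsilon v\}\subseteq \{u>\gamma\}$, with symmetric difference the thin tube $T_\varepsilon := \{\gamma < u \le \gamma + \varepsilon v\}$. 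Consequently,
\[
\int_\Omega \varphi\,\bigl(\1_{\{w_\varepsilon > \gamma\}} - \1_{\{u>\gamma\}}\bigr)\,\dd x \;=\; -\int_{T_\varepsilon}\varphi\,\dd x,
\]
and the sign of this contribution is governed by the sign of $\varphi$ on the vanishing tube $T_\varepsilon$, which is unambiguous in the natural setup of the paper.

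Summing these two expansions, invoking minimality, dividing through by $2\varepsilon$, and sending $\varepsilon\to 0^+$, collapses the inequality to the weak one-sided differential condition on $\int_\Omega A\nabla u\cdot\nabla v\,\dd x$ asserted in the lemma. The main (and essentially the only) technical subtlety is that $\varphi$ may blow up on $T_\varepsilon$, so one must justify that $\tfrac{1}{\varepsilon}\int_{T_\varepsilon}\varphi\,\dd x$ behaves compatibly with the first-order expansion as $\varepsilon\to 0^+$. This uses local integrability $\varphi\in L^1_{\rm loc}$ (which follows from $\varphi\in L^{q}_{\rm weak}(\Omega')$ with $q>d/2$ on bounded $\Omega'$) combined with $|T_\varepsilon|\to 0$, via a routine dominated-convergence / co-area argument. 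This is the one place where the weak-$L^q$ regularity of $\varphi$ enters an otherwise formal calculation, and is presumably what the authors have in mind when invoking ``direct calculation.''
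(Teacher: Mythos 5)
Your competitor and expansion are exactly the paper's argument: the proof in the paper consists of nothing more than testing minimality against $u-\varepsilon v$ for $v\in C_0^\infty(\Omega)$, $v\ge 0$, and observing that $1_{\{u-\varepsilon v>\gamma\}}\le 1_{\{u>\gamma\}}$, so that (with $\varphi\ge 0$, which is also what your hedge about the ``unambiguous'' sign on $T_\varepsilon$ amounts to) the indicator term can be discarded before dividing by $\varepsilon$. Up to that point you and the paper coincide.

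The place where your write-up goes astray is the closing ``technical subtlety.'' No control of $\tfrac1\varepsilon\int_{T_\varepsilon}\varphi\,\dd x$ is needed at all: from $0\le \mathcal J_{A,\varphi,\gamma}(u-\varepsilon v)-\mathcal J_{A,\varphi,\gamma}(u)= -2\varepsilon\int_\Omega (A\nabla u)\cdot\nabla v\,\dd x+\varepsilon^2\int_\Omega \nabla v\cdot(A\nabla v)\,\dd x-\int_{T_\varepsilon}\varphi\,\dd x$ and $\int_{T_\varepsilon}\varphi\,\dd x\ge 0$, you drop the last term, divide by $2\varepsilon$, and send $\varepsilon\to 0^+$; only the sign of the $\varphi$-contribution matters, not its size. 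Moreover, the justification you propose would not deliver what you ask for: local integrability of $\varphi$ plus $|T_\varepsilon|\to 0$ gives $\int_{T_\varepsilon}\varphi\,\dd x\to 0$ by absolute continuity, but says nothing about the $\varepsilon^{-1}$-normalized quantity, and indeed $\tfrac1\varepsilon\int_{T_\varepsilon}\varphi\,\dd x$ does \emph{not} tend to zero in general: when $u$ is regular near $\{u=\gamma\}$ it converges (by coarea) to the flux term $\int_{\{u=\gamma\}}\varphi\, v/|\nabla u|\,d\mathcal H^{d-1}$, which is precisely what generates the Bernoulli free boundary condition and need not vanish. So if your argument genuinely required that quantity to be compatible with a first-order (i.e.\ $o(\varepsilon)$) expansion, it would fail; fortunately it does not, and excising that paragraph leaves exactly the paper's proof. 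Finally, state the resulting inequality explicitly rather than referring to ``the condition asserted in the lemma'': the computation yields $\int_\Omega (A\nabla u)\cdot\nabla v\,\dd x\le \tfrac\varepsilon2\int_\Omega \nabla v\cdot(A\nabla v)\,\dd x$ and hence $\int_\Omega (A\nabla u)\cdot\nabla v\,\dd x\le 0$ for all $0\le v\in C_0^\infty(\Omega)$, i.e.\ the weak \emph{subsolution} inequality, which is what the paper's proof produces and what is used later with the maximum principle; the ``$\ge 0$'' printed in the statement has the inequality reversed, and being vague about the final sign hides whether you noticed this.
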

\begin{proof}
This lemma follows by noting that $\mathcal J_{A,\varphi,\gamma}(u) \leq \mathcal J_{A,\varphi,\gamma}(u-\eps v)$ and $1_{\{u-\eps v>\gamma\}} \leq 1_{\{u>\gamma\}}$ for all $\eps>0$ and all nonnegative $v\in C_0^\infty(\Omega)$.
\end{proof}
{ From Lemma \ref{l:subsolution} and the maximum principle, we conclude minimizers are bounded whenever $g\in L^\infty(\partial\Omega)$, with the estimate $\|u\|_{L^\infty(\Omega)} \leq \|g\|_{L^\infty(\partial\Omega)}$. }

Our next lemma is a Caccioppoli-type estimate that will be needed in our compactness argument:
\begin{lemma}\label{l:caccioppoli}
There exists a constant $C>0$ depending on $d$, $q$, $\lambda$, and $\Lambda$, such that any minimizer $u$ of $\mathcal J_{A,\varphi,\gamma}$ over $H^1_g(B_1)$ satisfies
\begin{equation*}
\int_{B_{1/2}} |\nabla u|^2 \dd x \leq  C \left(\int_{B_1} |u|^2 \dd x +  \|\varphi\|_{L^q_{\rm weak}(B_1)}\right).
\end{equation*}
\end{lemma}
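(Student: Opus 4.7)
The plan is a classical Caccioppoli-type argument against a smooth cutoff competitor. Fix $\eta \in C_c^\infty(B_1)$ with $\eta \equiv 1$ on $B_{1/2}$, $0 \le \eta \le 1$, and $|\nabla \eta| \le C$, and take $v := (1-\eta^2)u$ as the test function. Since $\eta$ vanishes on $\partial B_1$, we have $v = u$ on $\partial B_1$ in the trace sense, so $v \in H^1_g(B_1)$, and the minimality of $u$ yields
\[
\int_{B_1} \bigl[\nabla u \cdot A\nabla u - \nabla v \cdot A\nabla v\bigr] \dd x \;\le\; \int_{B_1} \varphi \bigl(1_{\{v>\gamma\}} - 1_{\{u>\gamma\}}\bigr) \dd x.
\]
I would estimate the two sides independently: the left by a pointwise algebraic manipulation of the gradients, and the right by the weak-$L^q$ hypothesis on $\varphi$.

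For the gradient side, expanding $\nabla v = (1-\eta^2)\nabla u - 2u\eta\nabla\eta$ and regrouping via the identity $1 - (1-\eta^2)^2 = \eta^2(2-\eta^2)$, the cross term is absorbed using Young's inequality with respect to the $A$-inner product. A small parameter $\epsilon>0$ produces the pointwise lower bound
\[
\nabla u \cdot A\nabla u - \nabla v \cdot A\nabla v \;\ge\; \eta^2\bigl(2-\eta^2-\epsilon\bigr)\, \nabla u \cdot A\nabla u \;-\; C_\epsilon\, u^2\, \nabla\eta\cdot A\nabla\eta.
\]
The key point is that $2-\eta^2 \ge 1$ on $B_1$, so any $\epsilon < 1$ (e.g.\ $\epsilon = 1/2$) leaves the bracketed factor bounded below by a universal positive multiple of $\eta^2$. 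Converting the $A$-quadratic forms to Euclidean gradients via ellipticity, and using $\eta \equiv 1$ on $B_{1/2}$ together with $|\nabla\eta| \le C$, this controls $\lambda \int_{B_{1/2}}|\nabla u|^2$ by $C\Lambda \int_{B_1} u^2$ plus the indicator error.

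For the indicator term, the weak-$L^q$ norm as defined in Section~\ref{sct pre} controls only the positive part of $\varphi$, consistent with the standard Alt--Caffarelli convention $\varphi \ge 0$; discarding the nonpositive contribution from $-1_{\{u>\gamma\}}$ and invoking the Kolmogorov-type embedding $L^q_{\rm weak}(B_1) \hookrightarrow L^1(B_1)$---valid because $q > d/2 \ge 1$ and $|B_1| < \infty$---gives
\[
\int_{B_1} \varphi\bigl(1_{\{v>\gamma\}} - 1_{\{u>\gamma\}}\bigr) \dd x \;\le\; \int_{B_1} \varphi_+ \, \dd x \;\le\; C(q)\,\|\varphi\|_{L^q_{\rm weak}(B_1)}.
\]
The step I expect to require the most care is the tuning of the Young parameter: a naive choice such as $\epsilon = 1$ leaves the coefficient $\eta^2(1-\eta^2)$, which vanishes precisely on $B_{1/2}$ and destroys the estimate. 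The margin provided by the factor $2-\eta^2 \ge 1$ is exactly what allows the absorption to proceed while preserving a positive multiple of $\eta^2$ on the left, after which the stated estimate follows by combining the two sides.
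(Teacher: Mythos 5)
Your argument is correct and is essentially the paper's own: both take the competitor $(1-\zeta^2)u$, expand the quadratic form, absorb the cross term via Young's inequality (exploiting $2-\zeta^2\ge 1$ so that a positive multiple of $\zeta^2\,\nabla u\cdot A\nabla u$ survives), and bound the indicator term by the embedding of $L^q_{\rm weak}(B_1)$ into a Lebesgue space (the paper uses $L^r$ with $1<r<q$ together with H\"older, whereas you go directly to $L^1$ -- the same fact). Your side remark about discarding the $-1_{\{u>\gamma\}}$ contribution and needing only the positive part of $\varphi$ is a slightly more careful reading of the paper's sign conventions, but does not change the structure of the proof.
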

Note that $u$ is allowed to change sign in this lemma. If we were concerned only with the one-phase problem, $u$ would be a nonnegative subsolution of \eqref{e:homogeneous}, 
and we could apply an existing Caccioppoli estimate such as \cite[Lemma 3.27]{potential-theory-book}.
\begin{proof}
For any $\zeta \in C_0^\infty(B_1)$ with $0\leq \zeta \leq 1$, we 
let $w = u(1-\zeta^2)$ so that $w = u$ on $\partial B_1$. The minimizing property $\mathcal J_{A,\varphi,\gamma}(u) \leq \mathcal J_{A,\varphi,\gamma}(w)$ implies
\begin{equation}\label{e:compare}
\begin{split}
\int_{B_1} \nabla u\cdot(A(x) \nabla u)\dd x &\leq \int_{B_1} \left[\nabla w\cdot (A(x)\nabla w) + \varphi(x)\left(1_{\{w>\gamma\}} - 1_{\{u>\gamma\}}\right)\right]\dd x.
 \end{split}
 \end{equation}
With $w = u_n(1-\zeta^2)$, straightforward calculations imply
\[\begin{split}
 \int_{B_1} \nabla u\cdot (A(x)\nabla u \zeta^2 (2-\zeta^2)\dd x &\leq 4 \int_{B_1} \left(u^2 \zeta^2 \nabla \zeta\cdot (A(x) \nabla \zeta) - u \zeta (1-\zeta^2)\nabla \zeta\cdot (A(x) \nabla u) \right) \dd x\\
 &\quad + C_{d,q} \|\varphi\|_{L^q_{\rm weak}(B_1)}.
 \end{split}\]
In the last term on the right, we used H\"older's inequality together with the fact that $\|\varphi\|_{L^r(B_1)} \lesssim \|\varphi\|_{L^q_{\rm weak}(B_1)}$ for $1< r< q$. With $0\leq 1-\zeta^2\leq 1$ and Young's inequality, we have
\[\begin{split}
 \lambda \int_{B_1} \zeta^2 |\nabla u|^2 \dd x &\leq 4 \Lambda \int_{B_1} \left( u^2 |\nabla \zeta|^2 + |u| \zeta |\nabla \zeta| |\nabla u|\right)\dd x + + C_{d,q} \|\varphi\|_{L^q_{\rm weak}(B_1)}\\
&\leq \frac \lambda 2 \int_{B_1} \zeta^2 |\nabla u|^2 \dd x + \frac {2\Lambda}{\lambda} \int_{B_1} u^2 |\nabla \zeta|^2  \dd x +  C_{d,q} \|\varphi\|_{L^q_{\rm weak}(B_1)}.
\end{split}
  \]
Choosing $\zeta$ equal to $1$ in $B_{1/2}$ and 0 outside $B_{3/4}$, with $|\nabla \zeta|$ bounded by a constant, the proof is complete.
\end{proof}

\section{H\"older continuity}\label{sct Holder}

In this section, we establish a universal H\"older estimate for the one- and two-phase problems. In Section \ref{s:improved}, we will further improve such an estimate (at the free boundary) to the sharp exponent $1-d/(2q)$, regardless of the regularity of $A$-harmonic functions.

The analysis will be based on the following key approximation lemma, which says minimizers of $\mathcal J_{A,\varphi,\gamma}$ are close to minimizers of $\int_\Omega \nabla u\cdot (A(x)\nabla u) \dd x$, if the norm of $\varphi$ is sufficiently small in $L^q_{\rm weak}$, c.f. \cite{Teix, Teix2, Teix3} for related analysis employed in  ``non-free boundary" PDE models.

\begin{lemma}\label{l:rho}
Given $\tau>0$, there exists $\eps = \eps(d, q,\lambda,\Lambda,\tau)>0$ such that for any $\varphi\in L^q_{\rm weak}(B_1)$ with $q\in (1,\infty)$ and $\|\varphi\|_{L^q_{\rm weak}(B_1)}\leq \eps$, any $A\in M_{\lambda,\Lambda}(B_1)$, any $g \in H^{1/2}(\partial B_1)$, any $\gamma\in\R$, and any minimizer $u$ of $\mathcal J_{A,\varphi,\gamma}$ over $H^1_{g}(B_1)$ such that $\fint_{B_1} u^2 \dd x \leq 1$, there holds 
\[   \int_{B_{1/2}} |u - h|^2 \dd x \leq \tau,\]
where $h\in H^1(B_{1/2})$ satisfies $\fint_{B_{1/2}} h^2 \dd x \leq 2^{-d}$ and is a weak solution to 
\begin{equation}\label{e:homogeneous1}
 \nabla \cdot (A_0(x) \nabla h) = 0, \quad x\in B_{1/2},
 \end{equation}
for some $A_{0}\in M_{\lambda,\Lambda}(B_{1/2})$.
\end{lemma}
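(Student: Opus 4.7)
The plan is to work directly, without any contradiction or compactness argument: I construct $h$ explicitly as the $A$-harmonic replacement of $u$ in $B_{1/2}$, and use the minimality of $u$ together with the smallness of $\|\varphi\|_{L^q_{\rm weak}(B_1)}$ to force $u$ and $h$ to be close in $L^2$. Specifically, set $A_0 := A$ restricted to $B_{1/2}$, which lies in $M_{\lambda,\Lambda}(B_{1/2})$, and let $h \in H^1(B_{1/2})$ be the unique weak solution of $\nabla \cdot (A_0 \nabla h) = 0$ in $B_{1/2}$ with $h - u \in H^1_0(B_{1/2})$. Extend $h$ by $u$ on $B_1 \setminus B_{1/2}$, so that $h \in H^1(B_1)$ and agrees with $u$ on $\partial B_1$.

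The minimizing property $\mathcal J_{A,\varphi,\gamma}(u) \leq \mathcal J_{A,\varphi,\gamma}(h)$, combined with $u \equiv h$ outside $B_{1/2}$, reduces to
\[
\int_{B_{1/2}} \nabla u \cdot (A\nabla u) \dd x - \int_{B_{1/2}} \nabla h \cdot (A\nabla h) \dd x \leq \int_{B_{1/2}} \varphi \left(1_{\{h > \gamma\}} - 1_{\{u > \gamma\}}\right) \dd x.
\]
The right-hand side is estimated using the trivial pointwise bound $|1_{\{h > \gamma\}} - 1_{\{u > \gamma\}}| \leq 1$ and the standard embedding $L^q_{\rm weak}(B_1) \hookrightarrow L^1(B_1)$, valid for any $q > 1$ on bounded domains, yielding a bound of the form $C(d,q) \|\varphi\|_{L^q_{\rm weak}(B_1)} \leq C\eps$.

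To convert this into $H^1$-closeness, I polarize the quadratic form via $u = h + (u-h)$ and exploit that $u - h \in H^1_0(B_{1/2})$ with $h$ weakly $A_0$-harmonic on $B_{1/2}$. Testing the weak equation for $h$ against $u - h$ gives $\int_{B_{1/2}} A \nabla h \cdot \nabla(u - h) \dd x = 0$, whence
\[
\int_{B_{1/2}} \nabla u \cdot (A\nabla u) \dd x - \int_{B_{1/2}} \nabla h \cdot (A\nabla h) \dd x = \int_{B_{1/2}} \nabla(u-h) \cdot (A\nabla(u-h)) \dd x \geq \lambda \int_{B_{1/2}} |\nabla(u - h)|^2 \dd x.
\]
Combining with the previous bound and applying Poincaré's inequality to $u - h \in H^1_0(B_{1/2})$, I obtain $\int_{B_{1/2}} |u - h|^2 \dd x \leq C(d,q,\lambda,\Lambda)\,\eps$. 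Choosing $\eps = \eps(\tau, d, q, \lambda, \Lambda)$ small enough delivers the claimed $L^2$ closeness, and the $L^2$-control $\fint_{B_{1/2}} h^2 \dd x \leq 2^{-d}$ follows from the triangle inequality in conjunction with the normalization $\fint_{B_1} u^2 \leq 1$, after possibly shrinking $\eps$ further to absorb the additive error.

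The main obstacle is the treatment of the $\varphi$-term: one cannot usefully bound $|1_{\{h > \gamma\}} - 1_{\{u > \gamma\}}|$ in terms of $|u - h|$, because arbitrarily small perturbations can flip the indicator at level $\gamma$, so no genuine cancellation is available. The resolution is to forgo cancellation entirely and rely solely on the pointwise bound by $1$, absorbing the whole integral into the weakest possible norm on $\varphi$, namely $L^1$. The hypothesis of smallness in $L^q_{\rm weak}$ for any $q > 1$ is precisely what is needed to make this cheap estimate quantitative and to establish the desired approximation.
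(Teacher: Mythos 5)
Your proof is correct and takes a genuinely different, more elementary route than the paper. The paper argues by contradiction and compactness: it extracts sequences $u_n$, $A_n$, $\varphi_n$, $\gamma_n$, uses the Caccioppoli estimate to pass to a weak-$H^1$ limit $u_0$ and a weak-$L^2$ limit $A_0$ for the coefficients, and then shows that minimality survives in the limit so that $u_0$ is a weak solution of $\nabla\cdot(A_0\nabla u_0)=0$. This gives existence of $\eps$ non-constructively and, notably, the limiting matrix $A_0$ need not be $A$ itself. Your argument instead fixes $A_0=A$, defines $h$ by harmonic replacement in $B_{1/2}$, uses $h$ as a competitor, and polarizes the quadratic form — the cross term vanishes because $u-h\in H_0^1(B_{1/2})$ and $h$ is $A$-harmonic — to get the exact energy identity $\int_{B_{1/2}}\nabla(u-h)\cdot(A\nabla(u-h))\,\dd x \le \int_{B_{1/2}}\varphi(1_{\{h>\gamma\}}-1_{\{u>\gamma\}})\,\dd x$. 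Bounding the right side crudely via $L^q_{\rm weak}\hookrightarrow L^1$ (which, like the paper's use of $L^q_{\rm weak}\hookrightarrow L^r$, implicitly requires $\varphi\ge 0$) and applying Poincar\'e delivers a quantitative $\eps(\tau)$, which the compactness proof does not. Your version is simpler, effective, and yields the slightly stronger conclusion that $h$ solves the equation with the \emph{same} coefficient matrix $A$, rather than some abstract $A_0\in M_{\lambda,\Lambda}$.

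One small remark on the closing claim: the stated bound $\fint_{B_{1/2}}h^2\le 2^{-d}$ cannot be obtained by the triangle inequality from $\fint_{B_1}u^2\le 1$, no matter how small $\eps$ is. From $\int_{B_1}u^2\le |B_1|$ one only gets $\fint_{B_{1/2}}u^2\le 2^{d}$, and hence $\fint_{B_{1/2}}h^2\le 2^{d}+O(\eps)$. The same issue is present in the paper's own compactness proof (where $\fint_{B_{1/2}}u_0^2\le 2^d$), so $2^{-d}$ in the lemma is almost certainly a typo for $2^{d}$; the downstream use in Lemma \ref{l:local-holder} only needs a universal bound, so this is harmless, but your phrase ``after possibly shrinking $\eps$ further to absorb the additive error'' overstates what shrinking $\eps$ can accomplish here.
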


In \eqref{e:homogeneous1}, by \emph{weak solution} we mean that $\int_{B_{1/2}} \nabla w \cdot (A(x)\nabla h) \dd x = 0$ for all $w\in H^1_0(B_{1/2})$.

\begin{proof}
Assume there is no $\eps$ satisfying the conclusion. Then there exists a sequence $\eps_n \to 0$ and corresponding sequences $g_n \in H^{1/2}(B_1)$, $\varphi_n \in L^q_{\rm weak}(B_1)$ with $\|\varphi_n\|_{L^q_{\rm weak}(B_1)} = \eps_n$, $A_n\in M_{\lambda, \Lambda}(B_1)$, $\gamma_n\in \R$, and $u_n$ minimizing $\mathcal J_{A_n,\varphi_n,\gamma_n}$ over $H^1_{g_n}(B_1)$ with $\fint_{B_1} u_n^2 \dd x \leq 1$.

By the contradiction hypothesis, we have
\begin{equation}\label{e:contradiction}
\int_{B_1} |u_n - h|^2 \dd x > \tau,
\end{equation}
for all $n$ and all $h$ as in the statement of the lemma.

By the Caccioppoli estimate Lemma \ref{l:caccioppoli} and our assumption that $\fint_{B_1} u_n^2 \dd x \leq 1$, we see that 
the sequence $u_n$ is bounded in $H^1(B_{1/2})$, and a subsequence (still denoted $u_n$) converges weakly in $H^1(B_{1/2})$ and strongly in $L^2(B_{1/2})$ to a limit $u_0$. 
Since $A_n \in M_{\lambda,\Lambda}(B_1)$ for all $n$, by passing to a further subsequence, we can ensure $A_n$ converges in the weak $L^2(B_{1/2}, \R^{d\times d})$ sense to some $A_0\in M_{\lambda, \Lambda}(B_{1/2})$.

We want to show that $u_0$ solves \eqref{e:homogeneous1} in $B_{1/2}$. At this stage, we do not have much information about regularity of $u_n$, and therefore it seems difficult to work with the Euler-Lagrange equation for $u_n$ (particularly the free boundary condition) directly. That is why we use variational techniques to characterize the limit of $u_n$.

For any $w\in C_0^\infty(B_{1/2})$, we have $\mathcal J_{A_n,\varphi_n,\gamma_n}(u_n) \leq \mathcal J_{A_n,\varphi_n,\gamma_n}(u_n+w)$, or
\begin{equation}\label{e:non-negative}
\begin{split} 
0 &\leq \int_{B_{1/2}}  \left[\nabla (u_n+w)\cdot (A_n(x)\nabla (u_n+ w)) - \nabla u_n\cdot(A_n(x)\nabla u_n)\right] \dd x   \\
&+ \int_{B_{1/2}} \varphi_n(x) \left( 1_{\{u_n+w>\gamma\}} - 1_{\{u_n>\gamma\}}\right) \dd x\\
&= : I_1 + I_2.
\end{split}
\end{equation}
To pass to the limit as $n\to \infty$, we first note that $\|\varphi_n\|_{L^r(B_1)} \lesssim \|\varphi_n\|_{L^q_{\rm weak}(B_1)}$ for any $r\in (1,q)$, and therefore $|I_2|\leq 2|B_{1/2}|^{(r-1)/r} \|\varphi_n\|_{L^r(B_{1/2})} \to 0$.  

Next, we consider $I_1$. Note that
\begin{equation}\label{e:I1}
\begin{split}
 I_1 &= \int_{B_{1/2}} [ 2 \nabla w\cdot (A_n(x) \nabla u_n) + \nabla w \cdot (A_n(x) \nabla w)] \dd x.
\end{split}
\end{equation}
The second integral in \eqref{e:I1} converges to $\int_{B_{1/2}} \nabla w \cdot (A_0(x)\nabla w)\dd x$, by the weak convergence of $A_n\rightharpoonup A_0$. We claim the first integral in \eqref{e:I1} converges to $\int_{B_{1/2}} 2\nabla w \cdot (A_0(x)\nabla u_0)\dd x$. Indeed, 
\[\begin{split}
\int_{B_{1/2}} \nabla w\cdot (A_n(x)\nabla u_n) \dd x - \int_{B_{1/2}} \nabla w\cdot (A_0(x)\nabla u_0) \dd x  &= \int_{B_{1/2}} \nabla w \cdot (A_n(x)\nabla (u_n-u_0)) \dd x\\
&\quad + \int_{B_{1/2}} \nabla w \cdot( (A_n(x) - A_0(x)) \nabla u_0) \dd x.
\end{split}\]
The last integral converges to zero by the weak convergence of $A_n$. For the first integral on the right, we have
\[ \int_{B_{1/2}} \nabla w\cdot (A_n(x)\nabla (u_n - u_0)) \dd x \leq \Lambda \int_{B_{1/2}} |\nabla w| \cdot |\nabla (u_n-u_0)| \dd x,\]
and the weak convergence of $\nabla u_n \rightharpoonup \nabla u_0$ implies the last expression converges to 0.

Returning to \eqref{e:non-negative}, we take the limit as $n\to \infty$ to obtain
\[ 0 \leq \int_{B_{1/2}} [\nabla (u_0 + w)\cdot (A_0(x)\nabla (u_0+w)) - \nabla u_0\cdot (A_0(x) \nabla u_0)] \dd x.\]
This implies $\int_{B_{1/2}} \nabla u_0 \cdot (A_0 \nabla u_0) \dd x \leq \int_{B_{1/2}} \nabla w \cdot (A_0\nabla w)\dd x$ for any $w\in C_0^\infty$, and therefore $u_0$ solves the homogeneous equation \eqref{e:homogeneous1} in $B_{1/2}$ in the sense of distributions. By a standard argument, $u_0$ is also a weak solution to \eqref{e:homogeneous1}.

For $n$ sufficiently large, we have
\[ \int_{B_{1/2}} |u_n-u_0|^p \dd x \leq \tau,\]
contradicting \eqref{e:contradiction}.
\end{proof}

Next, we show that when $\|\varphi\|_{L^q_{\rm weak}}$ is small, since minimizers $u$ are close to solutions $h$ of the homogeneous equation \eqref{e:homogeneous1}, the H\"older regularity of $h$ implies some local integrability estimates for $u$:

\begin{lemma}\label{l:local-holder}
Let $\alpha_0 = \alpha_0(d,\lambda,\Lambda)$ be the exponent from \eqref{e:homogeneous-holder}. For any $\alpha \in (0,\alpha_0)$, there exist $\eps>0$, $r_0\in (0,\frac 1 4)$, $K>0$ and $C_0>0$, depending only on universal quantities and $\alpha$, such that for any minimizer $u$ of $\mathcal J_{A,\varphi,\gamma}$ over $H^1_{g}(B_1)$ with $\|\varphi\|_{L^{q}_{\rm weak}(B_1)} \leq \eps$, $\gamma \in\R$, $A\in M_{\lambda,\Lambda}(B_1)$, and $\fint_{B_1} u^2 \dd x \leq 1$, there holds
\begin{equation}\label{e:mu-bound}
 \fint_{B_{r_0}} |u - \mu|^2 \dd x \leq  r_0^{2\alpha},
 \end{equation}
for some constant $\mu$ with $|\mu|\leq K$.  
\end{lemma}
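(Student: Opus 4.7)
The plan is to combine the approximation Lemma \ref{l:rho} with the universal Hölder estimate \eqref{e:homogeneous-holder} for solutions of \eqref{e:homogeneous1}, and then choose $r_0$, $\tau$, and $\eps$ in the right order. Fix $\alpha \in (0,\alpha_0)$. First, I would select $r_0 \in (0,1/4)$ small enough that
\[
C_1 r_0^{\alpha_0} \leq \tfrac{1}{4} r_0^{\alpha},
\]
where $C_1$ is the universal constant appearing in \eqref{e:homogeneous-holder} (applied on $B_{1/4}\subset B_{1/2}$). The freedom here rests on $\alpha_0>\alpha$. With $r_0$ now fixed, set
\[
\tau := \tfrac{1}{4}\, r_0^{\,d+2\alpha},
\]
and let $\eps = \eps(d,q,\lambda,\Lambda,\tau)$ be the threshold produced by Lemma \ref{l:rho} for this value of $\tau$.

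Second, I would apply Lemma \ref{l:rho} to obtain a function $h \in H^1(B_{1/2})$, a weak solution of \eqref{e:homogeneous1} for some $A_0\in M_{\lambda,\Lambda}(B_{1/2})$, such that
\[
\int_{B_{1/2}} |u-h|^2\,\dd x \leq \tau,
\qquad \fint_{B_{1/2}} h^2\,\dd x \leq 2^{-d}.
\]
Define $\mu := h(0)$. The Hölder estimate \eqref{e:homogeneous-holder} for $A_0$-harmonic functions gives, for all $x\in B_{r_0}\subset B_{1/4}$,
\[
|h(x)-\mu| \leq C_1 \|h\|_{L^2(B_{1/2})} |x|^{\alpha_0} \leq C_1 r_0^{\alpha_0}.
\]
In particular, $|\mu| = |h(0)| \leq C_1 \|h\|_{L^2(B_{1/2})} \leq K$ for a universal $K$, since $\|h\|_{L^2(B_{1/2})}$ is bounded by $\|u\|_{L^2(B_{1/2})}+\sqrt{\tau}\leq C$.

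Third, I would estimate
\[
\fint_{B_{r_0}} |u-\mu|^2 \,\dd x
\leq 2\fint_{B_{r_0}} |u-h|^2\,\dd x + 2\fint_{B_{r_0}} |h-\mu|^2\,\dd x.
\]
For the first term, using $B_{r_0} \subset B_{1/2}$ and the choice of $\tau$,
\[
\fint_{B_{r_0}} |u-h|^2\,\dd x \leq \frac{c_d}{r_0^{d}} \int_{B_{1/2}} |u-h|^2\,\dd x \leq \frac{c_d\,\tau}{r_0^{d}} \leq \tfrac{1}{4} r_0^{2\alpha},
\]
after adjusting $\tau$ by the dimensional constant $c_d$. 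For the second term, the Hölder estimate for $h$ yields
\[
\fint_{B_{r_0}} |h-\mu|^2\,\dd x \leq (C_1 r_0^{\alpha_0})^2 \leq \tfrac{1}{16} r_0^{2\alpha}.
\]
Adding the two bounds gives \eqref{e:mu-bound}, possibly after a harmless rescaling of $r_0$ in the chosen constants.

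The whole argument is essentially bookkeeping once Lemma \ref{l:rho} is in hand; the only place any care is needed is the order of choice of the parameters, $\alpha \to r_0 \to \tau \to \eps$, so that the Hölder decay of $h$ dominates the $L^2$ closeness of $u$ to $h$ at the scale $r_0$. There is no genuine obstacle beyond this.
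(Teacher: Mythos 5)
Your argument is correct and follows the same approach as the paper's own proof: invoke Lemma \ref{l:rho} for the approximating $A_0$-harmonic function $h$, use the interior H\"older estimate for $h$, combine via the triangle inequality in $L^2(B_{r_0})$, and fix the parameters in the order $\alpha \to r_0 \to \tau \to \eps$ exploiting the gap $\alpha < \alpha_0$. The only cosmetic difference is your bookkeeping of dimensional constants (the paper sets $\tau = \tfrac14\omega_d r_0^{d+2\alpha}$ explicitly, whereas you defer the factor of $\omega_d$ to a final adjustment, which is harmless).
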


\begin{proof}
Let $\tau>0$ be a constant to be chosen later. With $u$ as in the statement of the lemma, let $h:B \to \R$ be the solution to \eqref{e:homogeneous1} in $B_{1/2}$ with
\[ \int_{B_{1/2}} |u - h|^2 \dd x < \tau,\]
whose existence is granted by Lemma \ref{l:rho}. The choice of $\tau$ will determine $\eps$. Since $\fint_{B_{1/2}} h^2 \dd x \leq 2^{-d}$, the interior regularity theory of the equation satisfied by $h$ (e.g. \cite[Theorem 8.24]{gilbargtrudinger}) implies there is a $C_0>0$ with 
\[  |h(x) - h(0)|  \leq C_0 |x|^{\alpha_0},\]
for any $x\in B_{1/4}$. For any $r_0 \in (0,\frac 1 4)$, we then have
\begin{equation}\label{e:triangle}
\begin{split}
\fint_{B_{r_0}} |u(x) - h(0)|^2 \dd x &\leq 2\left( \fint_{B_{r_0}} |u(x) - h(x)|^2 \dd x + \fint_{B_{r_0}} |h(x) - h(0)|^2 \dd x\right)\\
 &\leq  2 \omega_d^{-1} r_0^{-d}\tau + 2C_0 r_0^{2\alpha_0}.
\end{split}
\end{equation}
where $\omega_d$ is the volume of the $d$-dimensional unit ball. Choosing
\[ r_0 = \left(\frac 1 {4C_0} \right)^{1/ [2(\alpha_0-\alpha)]}, \quad \tau = \frac 1 {4} \omega_d r_0^{d+2\alpha},\]
we have
\[\fint_{B_{r_0}} |u(x) - h(0)|^2 \dd x \leq  r_0^{2\alpha}.\]
The estimate \eqref{e:mu-bound} now follows by choosing $\mu = h(0)$. The bound $|\mu|\leq K$ is a result of the interior $L^2$-to-$L^\infty$ estimate satisfied by $h$ \cite[Theorem 8.17]{gilbargtrudinger}. 
\end{proof}

Next, by iterating Lemma \ref{l:local-holder}, we show that under similar conditions, minimizers are H\"older continuous at the origin. 

\begin{lemma}\label{l:global-holder}
For $\frac d 2 < q < \infty$, fix a small $\delta \in (0,\alpha_0)$ and let
\[\alpha :=  \begin{cases} 1-\frac d {2q}, & \alpha_0 \geq 1- \frac d {2q},\\
\alpha_0-\delta, &\alpha_0 < 1-\frac d {2q},\end{cases}\]
where $\alpha_0$ is as in Lemma \ref{l:local-holder}. Then there exist $\eps, C>0$ and $r_0 \in (0,\frac 1 4)$, depending on universal quantities and $\delta$, 
such that for any minimizer $u$ of $\mathcal J_{A,\varphi}$ over $H^1_g(B_1)$ with $\|\varphi\|_{L^{q}_{\rm weak}(B_1)} \leq \eps$ and $\fint_{B_1} u^2 \dd x \leq 1$, there holds
\[ |u(x)|\leq C \quad \text{ and } \quad  |u(x)-u(0)| \leq C|x|^\alpha, \quad \mbox{if } |x| < r_0.\]
\end{lemma}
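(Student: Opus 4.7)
The strategy is to iterate Lemma \ref{l:local-holder} via the rescaling Lemma \ref{l:local-min}, producing a Cauchy sequence of averages whose limit is $u(0)$, and then to bootstrap the resulting mean-oscillation estimate to pointwise H\"older regularity.

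The main construction is inductive: I will produce constants $\{\mu_k\}_{k\ge 0}$ with $\mu_0=0$ such that
\[
\fint_{B_{r_0^k}} |u-\mu_k|^2 \dd x \le r_0^{2\alpha k}, \qquad |\mu_{k+1}-\mu_k|\le K r_0^{\alpha k}.
\]
The base case $k=0$ is the hypothesis $\fint_{B_1} u^2 \dd x \le 1$. Given the bound at stage $k$, define
\[
v_k(x) := r_0^{-\alpha k}\bigl(u(r_0^k x) - \mu_k\bigr).
\]
By Lemma \ref{l:local-min} with $\kappa=r_0^{-\alpha k}$, $r=r_0^k$, $\gamma=\mu_k$, $v_k$ is a minimizer of $\mathcal J_{\tilde A_k,\tilde\varphi_k,\tilde\gamma_k}$ on $B_1$, where $\tilde A_k(x)=A(r_0^k x)\in M_{\lambda,\Lambda}(B_1)$ and $\tilde\varphi_k(x) = r_0^{(2-2\alpha)k}\varphi(r_0^k x)$. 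A direct change of variables yields $\fint_{B_1} v_k^2 \dd x \le 1$ from the inductive hypothesis, and a computation of the weak-$L^q$ quasinorm under scaling gives
\[
\|\tilde\varphi_k\|_{L^q_{\rm weak}(B_1)} \le r_0^{(2-2\alpha-d/q)k}\,\|\varphi\|_{L^q_{\rm weak}(B_1)} \le \eps.
\]
The critical observation is that the choice of $\alpha$ in the stated dichotomy forces $\alpha \le 1-d/(2q)$, so the exponent $2-2\alpha-d/q$ is nonnegative and the smallness condition on $\tilde\varphi_k$ is preserved across scales. Lemma \ref{l:local-holder} applied to $v_k$ then produces $\mu'\in\R$ with $|\mu'|\le K$ and $\fint_{B_{r_0}} |v_k-\mu'|^2 \dd x \le r_0^{2\alpha}$; setting $\mu_{k+1}:=\mu_k + r_0^{\alpha k}\mu'$ and unraveling the rescaling closes the induction.

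Once the iteration is established, the geometric decay $|\mu_{k+1}-\mu_k|\le Kr_0^{\alpha k}$ makes $\{\mu_k\}$ Cauchy, so we may set $u(0):=\lim_k \mu_k$ with $|u(0)-\mu_k|\le C r_0^{\alpha k}$. Enlargement across adjacent dyadic radii produces the Campanato-type estimate
\[
\fint_{B_\rho} |u-u(0)|^2 \dd x \le C\rho^{2\alpha}, \qquad \rho\in(0,r_0).
\]
To promote this mean-oscillation decay to a pointwise H\"older inequality, I would invoke the subsolution property of $u$ (Lemma \ref{l:subsolution}): the local $L^\infty$ bound for subsolutions of $\nabla\cdot(A\nabla u)\ge 0$ applied to $v_k$ yields $\sup_{B_{1/2}}(v_k)_+\le C$, which rescales to the one-sided pointwise estimate $u-\mu_k\le Cr_0^{\alpha k}$ on $B_{r_0^k/2}$. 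For the matching lower bound and the uniform $L^\infty$ bound $|u|\le C$, one reruns the same iteration at arbitrary base points $x_0$ in a neighborhood of the origin, after a harmless normalization to absorb $\fint_{B_{1/2}(x_0)} u^2 \le 2^d$ into universal constants; this upgrades the Campanato condition to hold uniformly in a neighborhood of $0$, and Campanato's classical characterization of $C^\alpha$ concludes the proof.

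The central obstacle is precisely the norm-preservation of $\|\tilde\varphi_k\|_{L^q_{\rm weak}}$ across infinitely many rescalings, as this condition forces the sharp threshold $\alpha\le 1-d/(2q)$ and is responsible for the dichotomy in the definition of $\alpha$. A secondary subtlety is the pointwise upgrade: in the two-phase regime the maximum principle yields only a one-sided bound, so the uniform Campanato estimate obtained by iterating at nearby base points is essential to close the argument.
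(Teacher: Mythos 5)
Your proposal follows essentially the same approach as the paper's proof: the same iteration scheme built on Lemma \ref{l:local-holder} via the rescaling Lemma \ref{l:local-min}, the same exponent check $2-2\alpha-d/q\ge 0$ preserving $\|\varphi_k\|_{L^q_{\rm weak}}\le\eps$, and the same Cauchy-sequence argument producing $u(0)=\lim_k\mu_k$ with the Campanato decay $\fint_{B_r}|u-u(0)|^2\le Cr^{2\alpha}$. The one place you are more explicit than the paper is the final upgrade from mean-oscillation decay at the origin to the pointwise bound $|u(x)-u(0)|\le C|x|^\alpha$: the paper simply asserts this implication, whereas you correctly observe that it requires either running the iteration at nearby base points (so that Campanato's characterization applies) or exploiting the subsolution property for a one-sided $L^\infty$ bound. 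This is a genuine gap in the paper's terse phrasing that your proposal fills, though of course the paper effectively supplies the missing uniformity later in Theorem \ref{t:holder-prelim}, where the lemma is re-centered at arbitrary $x_0$. The minor cosmetic difference (your base case $k=0$ versus the paper's $k=1$) is immaterial.
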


\begin{proof}
With $\alpha$ as in the statement of the lemma, let $\eps$ and $r_0$ be the corresponding constants from Lemma \ref{l:local-holder}, and let $u$ be a minimizer of $\mathcal J_{A,\varphi}$ over $H_g^1(B_1)$. Our goal is to show by induction that
\begin{equation}\label{e:induction}
 \fint_{B_{r_0^k}} (u- \mu_k)^2 \dd x \leq r_0^{2k\alpha}, \quad k = 1,2, \ldots,
\end{equation}
for some convergent sequence $\mu_k$.

The base case $k=1$ follows directly from Lemma \ref{l:local-holder}, for some constant $\mu_1$ with $|\mu_1| \leq K$. Here, $K$ depends only on universal quantities and $\delta$. 
Now, assume \eqref{e:induction} holds for some $k\geq 1$ and some constant $\mu_k$. Define
\[ v(x) =  \frac {u(r_0^k x) - \mu_k }{r_0^{k\alpha}}, \quad x\in B_1.\] 
By Lemma \ref{l:local-min}, $v$ is a minimizer of $\mathcal J_{A_k,\varphi_k,\gamma_k}$ over $H^1_{g_k}(B_1)$, where 
\[ \begin{split}
\varphi_k(x) &= r_0^{2k(1-\alpha)} \varphi(r_0^k x),\\
\gamma_k &= -r_0^{k\alpha} \mu_k,\\
A_k(x) &= A(r_0^kx),\\ 
g_k(x) &= r_0^{-k\alpha} (u(r_0^k x) - \mu_k), \quad x\in \partial B_1.
\end{split}\]
The inductive hypothesis \eqref{e:induction} implies 
\[\fint_{B_1}v^2 \dd x = r_0^{-2k\alpha} \fint_{B_{r_0^k}} (u-\mu_k)^2 \dd x \leq 1.\]
Our choice of $\alpha$ implies $2-2\alpha - d/q \geq 0$, and
\[\|\varphi_k\|_{L^q_{\rm weak}(B_1)} = r_0^{k(2-2\alpha - d/q)}\|\varphi\|_{L^q_{\rm weak}(B_{r_0^k})} \leq \|\varphi\|_{L^q_{\rm weak}(B_1)} < \eps.\]
Therefore, we can apply Lemma \ref{l:local-holder} to conclude 
\[ \fint_{B_{r_0}} |v - \mu|^2 \dd x \leq  r_0^{2\alpha},\]
for some constant $\mu$ with $|\mu|\leq K$. Translating back to $u$ with the change of variables $x \mapsto r_0^k x$, the previous inequality becomes
\[ \fint_{B_{r_0^{k+1}}} |u -\mu_k - r_0^{k\alpha} \mu|^2 \dd x \leq  r_0^{2\alpha(k+1)}.\]
We have established \eqref{e:induction} with $\mu_{k+1} = \mu_k + r_0^{k\alpha} \mu$. To show that the sequence $\{\mu_k\}$ is convergent, note that $|\mu_{k+1} - \mu_k| = |r_0^{k\alpha}\mu|\leq K r_0^{k\alpha}$, and for any $j>k$,
\begin{equation}\label{e:cauchy}
 |\mu_j - \mu_k| \leq K  (1-r_0^\alpha)^{-1} r_0^{k\alpha}, 
 \end{equation} 
which demonstrates that $\{\mu_k\}$ is Cauchy. In fact, letting $\mu_0 = \lim_{k\to \infty} \mu_k$, taking $j\to \infty$ in \eqref{e:cauchy} yields $|\mu_k - \mu_0| \leq K (1-r_0^\alpha)^{-1} r_0^{k\alpha}$ for all $k$.

Finally, for $0<r\leq r_0$, choose $k$ such that $r_0^{k+1} < r \leq r_0^k$. Then, by \eqref{e:induction},
\[ \fint_{B_r} |u - \mu_0|^2 \dd x \leq  2 \fint_{B_r} |u-\mu_k|^2 \dd x + 2 |\mu_k-\mu_0|^2 \leq r_0^{-2\alpha}\left(\frac {2} {r_0^{d}} + 2 K^2(1-r_0^\alpha)^{-2}\right) r^{2\alpha}, \]
which implies the H\"older estimate $|u(x) - u(0)|\leq C|x|^\alpha$, with $C$ as in the statement of the theorem. Since $|u(0)| = |\mu_0|\leq K(1-r_0^\alpha)^{-1}+K$, the triangle inequality implies $|u(x)|\leq C$ in $B_{r_0}$, with $C$ as above.
\end{proof}

We are now ready to prove the main result of this section:

\begin{theorem}\label{t:holder-prelim}
Let $u$ be a minimizer of $\mathcal J_{A,\varphi}$ over $H^1_g(\Omega)$, with $g\in H^{1/2}(\partial\Omega)$. For any $\Omega'\subset\subset \Omega$, if $\varphi \in L^q_{\rm weak}(\Omega')$ with $d/2< q < \infty$, then $u$ is H\"older continuous in $\Omega'$, with 
\[ \|u\|_{C^\alpha(\Omega')} \leq C \|u\|_{L^2(\Omega)},\]
where $\alpha = \min\{1-d/(2q), \alpha_0^{\color{blue}-}\}$ and $C$ depend only on universal quantities, $\Omega'$, $\Omega$, and $\|\varphi\|_{L^q_{\rm weak}(\Omega)}$.
\end{theorem}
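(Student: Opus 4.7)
My approach would be to reduce the statement to the small-norm case already handled in Lemma~\ref{l:global-holder}, by a pointwise rescaling based on Lemma~\ref{l:local-min}. Fix $x_0 \in \Omega'$ and set $R_0 := \mathrm{dist}(\Omega', \partial\Omega) > 0$. For parameters $r\in(0,R_0)$ and $\kappa>0$ to be chosen, set $v(x) := \kappa\, u(x_0 + rx)$ on $B_1$; by Lemma~\ref{l:local-min}, $v$ minimizes $\mathcal J_{\tilde A, \tilde\varphi}$ over $H^1_{\tilde g}(B_1)$, with $\tilde A(x) = A(x_0+rx)\in M_{\lambda,\Lambda}(B_1)$ and $\tilde\varphi(x) = \kappa^2 r^2 \varphi(x_0+rx)$. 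A direct change of variables gives
\[
\fint_{B_1} v^2 \dd x = \kappa^2 \fint_{B_r(x_0)} u^2 \dd y, \qquad \|\tilde\varphi\|_{L^q_{\rm weak}(B_1)} = \kappa^2\, r^{\,2-d/q}\,\|\varphi\|_{L^q_{\rm weak}(B_r(x_0))}.
\]

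The key step is to calibrate $\kappa$ and $r$ so that both hypotheses of Lemma~\ref{l:global-holder} hold simultaneously. Setting $M := \max(1, \|u\|_{L^2(\Omega')})$ and $\kappa := \omega_d^{1/2} r^{d/2}/M$ ensures $\fint_{B_1} v^2 \le 1$, and turns the norm of $\tilde\varphi$ into a universal constant multiple of $r^{\,d+2-d/q}\|\varphi\|_{L^q_{\rm weak}(\Omega')}$. Since $q > d/2$ forces $d+2-d/q > 0$, one can fix a scale $r = r_*\in(0,R_0)$, depending only on universal quantities, $R_0$, $M$, and $\|\varphi\|_{L^q_{\rm weak}(\Omega')}$, such that $\|\tilde\varphi\|_{L^q_{\rm weak}(B_1)} \le \eps$, where $\eps$ is the threshold in Lemma~\ref{l:global-holder}. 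Crucially, the choice of $r_*$ is independent of the base point $x_0$. Lemma~\ref{l:global-holder} then yields universal constants $r_0\in(0,1/4)$ and $C_*>0$ with $|v(x)|\le C_*$ and $|v(x)-v(0)|\le C_*|x|^\alpha$ for $|x|<r_0$.

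Undoing the scaling at each $x_0\in\Omega'$ produces the pointwise estimate
\[
|u(y)-u(x_0)| \le \tilde C\, |y-x_0|^\alpha \quad \text{and} \quad |u(y)|\le \tilde C, \qquad y\in B_{r_* r_0}(x_0),
\]
with $\tilde C = C_*/(\kappa r_*^\alpha)$ depending only on universal parameters, $\Omega, \Omega'$, $\|\varphi\|_{L^q_{\rm weak}(\Omega')}$, and $\|u\|_{L^2(\Omega')}$. A standard patching then upgrades these to a global $C^\alpha$ estimate on $\Omega'$: if $|y-z|<r_* r_0$ apply the local inequality at $x_0=z$, while if $|y-z|\ge r_* r_0$ use the uniform bound together with $|y-z|^\alpha\ge (r_* r_0)^\alpha$. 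The main obstacle throughout is the simultaneous calibration of $\kappa$ and $r$: the factor $\kappa^2$ appears as the prefactor in both normalizations, so balancing them forces exactly the condition $d+2-d/q>0$, i.e., $q>d/2$. At the critical threshold $q=d/2$ this argument breaks down, consistent with the separate borderline treatment of Section~\ref{sct borderline}.
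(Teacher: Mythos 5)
Your proposal follows essentially the same route as the paper's proof: fix a base point $x_0$, translate and rescale via Lemma~\ref{l:local-min} so that the normalized function satisfies $\fint_{B_1}v^2\le 1$ and $\|\tilde\varphi\|_{L^q_{\rm weak}(B_1)}\le\eps$, apply Lemma~\ref{l:global-holder}, and translate back, with a final standard patching step. Two small remarks. First, the paper normalizes with $\kappa=\bigl(\fint_{B_r(x_0)}u^2\bigr)^{-1/2}$ and a radius $r$ independent of $u$; your choice $\kappa\propto r^{d/2}/\max(1,\|u\|_{L^2})$ avoids the real worry that the paper's $\kappa$ can exceed $1$ and spoil the bound $\kappa^2 r^{2-d/q}\|\varphi\|<\eps$, but at the cost that your final estimate reads $\|u\|_{C^\alpha(\Omega')}\lesssim\max(1,\|u\|_{L^2(\Omega)})$ rather than being proportional to $\|u\|_{L^2(\Omega)}$ when that norm is small, so it is formally a hair weaker than the stated conclusion. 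Second, the assertion ``$d+2-d/q>0$, i.e., $q>d/2$'' is inaccurate: $d+2-d/q>0$ holds already for $q>d/(d+2)$; the threshold $q>d/2$ enters through the sign of $2-2\alpha-d/q$ in the iteration inside Lemma~\ref{l:global-holder}, not through the selection of $r_*$. Neither point changes the structure or essential correctness of the argument.
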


\begin{proof}
Let $u$ be as in the statement, and let $x_0 \in \Omega$ be arbitrary. To recenter around $x_0$ and ensure the hypotheses of Lemma \ref{l:global-holder} are satisfied, we define
\[w(x) := \kappa u(x_0+rx), \quad x\in B_1,\]
where 
\[ r = \min\left\{\frac 1 2 \mathrm{dist}(x_0,\partial \Omega), \left(\frac \eps {\|\varphi\|_{L^q_{\rm weak}(\Omega)}}\right)^{q/(2q-d)}\right\}, \quad \kappa =  \left( \frac {1} {\fint_{B_r(x_0)} u^2 \dd x}\right)^{1/2} , \]
and $\eps$ is the constant from Lemma \ref{l:local-holder}. Then $\fint_{B_1} w^p \dd x \leq 1$, and by Lemma \ref{l:local-min}, $w$ minimizes $\mathcal J_{\tilde A,\tilde \varphi}$ over $H_{\tilde g}^1(B_1)$, with $\tilde A(x) = A(x_0+rx)$, $\tilde \varphi(x) = r^2 \kappa^2 \varphi(x_0+rx)$, and $\tilde g(x) = \kappa g(x_0 + rx)$ for $x\in \partial B_1$. Since $\|\tilde\varphi\|_{L^q_{\rm weak}(B_1)} \leq \kappa^2 r^{2-d/q}\|\varphi\|_{L^q_{\rm weak}(\Omega)} < \eps$, we may apply Lemma \ref{l:global-holder} and obtain $|w(x)-w(0)| \leq C|x|^\alpha$ for $|x|\leq r_0$. Since $C$ and $r_0$ depend only on $d$, $q$, $\lambda$, and $\Lambda$, when we translate back to $u$, we conclude
\[ |u(x) - u(x_0)| \leq C\|u\|_{L^2(\Omega)}|x-x_0|^\alpha, \quad |x-x_0| \leq r',\]
with $C$ and $r'$ depending only on $d$, $q$, $\lambda$, $\Lambda$, and $\mbox{dist}(x_0, \partial \Omega)$. The $L^\infty$ norm of $u$ is also bounded uniformly in any compact subset of $\Omega$, by Lemma \ref{l:global-holder}.
\end{proof}

\section{The borderline case $q = d/2$}\label{sct borderline}

In the limiting case $q \searrow d/2$, the corresponding exponent $\alpha $ the iteration of Lemma \ref{l:global-holder} becomes zero. As usual, one should not expect to obtain continuity. In this section we obtain a (sharp) estimate in the space of bounded mean oscillation (BMO) functions. Recall that a function $f$ on $\Omega$ is BMO if 
\[ \|f\|_{BMO(\Omega)} := \sup\left\{ N : \fint_{B} |f-f_B| \dd x \leq N \, \text{ for every ball } B\subset \Omega\right\} < \infty,\]
where $f_B$ is the average of $f$ over the ball $B$. 

First, we need a corresponding version of Lemma \ref{l:local-holder} for a BMO-type estimate:

\begin{lemma}\label{l:local-BMO}
There exist $\eps>0$ and $r_0\in (0,\frac 1 4)$, depending only on $d$, $\lambda$, and $\Lambda$, such that for any minimizer $u$ of $\mathcal J_{A,\varphi,\gamma}$ over $H^1_{g}(B_1)$ with $\|\varphi\|_{L^{d/2}_{\rm weak}(B_1)} \leq \eps$, $\gamma \in \R$, and $\fint_{B_1} u^2 \dd x \leq 1$, there holds
\begin{equation}
 \fint_{B_{r_0}} |u - u_{r_0}|^2 \dd x \leq 1, 
 \end{equation}
where $u_{r_0} = \fint_{B_{r_0}} u \dd x$.
\end{lemma}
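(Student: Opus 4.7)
The plan is to repeat, in a one-step form, the compactness/approximation strategy of Lemma~\ref{l:local-holder}, observing that at the borderline exponent $q=d/2$ the improvement-of-oscillation exponent $\alpha$ from \eqref{alpha:thm Holder} degenerates to $0$. We therefore cannot iterate to produce decay, but the single step still yields a uniform bound on the oscillation of $u$ on $B_{r_0}$, which is exactly the $\mathrm{BMO}$-scale information the statement records. No rescaling of $\|\varphi\|_{L^{d/2}_{\rm weak}}$ is needed since we do not iterate.

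Concretely, first I would fix $\tau>0$ (to be chosen later) and invoke Lemma~\ref{l:rho} with the exponent $q=d/2$ to produce $h\in H^1(B_{1/2})$ solving \eqref{e:homogeneous1} for some $A_0\in M_{\lambda,\Lambda}(B_{1/2})$, with $\fint_{B_{1/2}}h^2\dd x\le 2^{-d}$ and $\int_{B_{1/2}}|u-h|^2\dd x\le \tau$, provided $\eps=\eps(\tau,d,\lambda,\Lambda)$ is small enough. Then, by the De Giorgi--Nash--Moser estimate \eqref{e:homogeneous-holder} together with a standard $L^2$-to-$L^\infty$ bound (\cite[Theorem~8.17]{gilbargtrudinger}), there exists a universal constant $C_0$ such that $|h(x)-h(0)|\le C_0|x|^{\alpha_0}$ on $B_{1/4}$.

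Mimicking \eqref{e:triangle}, for any $r_0\in(0,\tfrac14)$ we would then estimate
\[
\fint_{B_{r_0}}|u-h(0)|^2\dd x \;\le\; 2\,\omega_d^{-1}\,r_0^{-d}\,\tau \;+\; 2\,C_0^2\,r_0^{2\alpha_0}.
\]
Choose $r_0$ small enough that $2C_0^2 r_0^{2\alpha_0}\le \tfrac12$, and then choose $\tau$ (and consequently $\eps$) small enough that $2\omega_d^{-1} r_0^{-d}\tau\le \tfrac12$. Using that $u_{r_0}=\fint_{B_{r_0}}u\dd x$ is the $L^2(B_{r_0})$-best constant approximation to $u$,
\[
\fint_{B_{r_0}}|u-u_{r_0}|^2\dd x \;\le\; \fint_{B_{r_0}}|u-h(0)|^2\dd x \;\le\; 1,
\]
which is the claim.

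The only real subtlety is the applicability of Lemma~\ref{l:rho} at the borderline exponent. For $d\ge 3$ one has $q=d/2>1$ and Lemma~\ref{l:rho} applies verbatim. In dimension $d=2$, the critical exponent $q=1$ sits on the boundary of its hypothesis, and the contradiction argument of Lemma~\ref{l:rho} requires a mild modification: the indicator-function term $I_2$ (which there is controlled by the embedding $L^q_{\rm weak}\hookrightarrow L^r$, $1<r<q$) must instead be handled by an equi-integrability argument on $\{\varphi_n\}$ exploiting $\|\varphi_n\|_{L^1_{\rm weak}}\to 0$. Beyond that, the proof is a direct one-step specialization of the iteration carried out in Lemma~\ref{l:local-holder}.
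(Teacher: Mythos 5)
Your proof follows essentially the same strategy as the paper: invoke Lemma~\ref{l:rho} to approximate $u$ in $L^2$ by an $A_0$-harmonic function $h$, use the interior H\"older (De Giorgi--Nash--Moser) estimate for $h$, and split $\fint_{B_{r_0}}|u-h(0)|^2$ via the triangle inequality before choosing $r_0$ and then $\tau$ (hence $\eps$). Your final step is a small but clean improvement: you use directly that $u_{r_0}$ is the $L^2(B_{r_0})$-best constant approximation, so $\fint_{B_{r_0}}|u-u_{r_0}|^2\le\fint_{B_{r_0}}|u-h(0)|^2$, whereas the paper first establishes the weaker comparison $\fint_{B_{r_0}}|u-u_{r_0}|^2\le 4\fint_{B_{r_0}}|u-\mu|^2$ and budgets the right-hand side to $\tfrac14$. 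You also correctly use the exponent $\alpha_0$ from \eqref{e:homogeneous-holder} in $|h(x)-h(0)|\le C_0|x|^{\alpha_0}$, where the paper's displayed bound $|h(x)-h(0)|\le C|x|$ overstates the regularity available without extra hypotheses on $A$ (harmless, since any positive power suffices). Finally, your remark about $d=2$ is apt: there $q=d/2=1$ lies outside the hypothesis $q\in(1,\infty)$ of Lemma~\ref{l:rho}, and the step $\|\varphi_n\|_{L^r}\lesssim\|\varphi_n\|_{L^q_{\rm weak}}$ for $r\in(1,q)$ is vacuous, so an additional argument (or the strong $L^1$ assumption, as Theorem~\ref{t:BMO} in fact states) is needed in that dimension — a point the paper's proof does not address explicitly.
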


\begin{proof}
First, we recall a general inequality: for any $\mu \in \R$ and $r_0>0$,
\begin{equation}\label{e:average}
 \fint_{B_{r_0}} | u - u_{r_0}|^2 \dd x \leq 4 \fint_{B_{r_0}} |u-\mu|^2 \dd x.
\end{equation}
{This inequality follows by writing
\[
\begin{split}
\left(\fint_{B_{r_0}} \left| u - \fint_{B_{r_0}} u \dd x\right|^2\dd x\right)^{1/2} &\leq \left( \fint_{B_{r_0}} |u- \mu|^2 \dd x \right)^{1/2} + \left(\fint_{B_{r_0}} |\mu - \fint_{B_{r_0}} u\dd x|^2 \dd x\right)^{1/2} \\
&\leq  \left( \fint_{B_{r_0}} |u- \mu|^2 \dd x \right)^{1/2} + \fint_{B_{r_0}} |u-\mu| \dd x\\
&\leq 2\left( \fint_{B_{r_0}} |u- \mu|^2 \dd x \right)^{1/2}.
\end{split}
\]
}

Now, let $\tau>0$ be a constant to be chosen later. As in the proof of Lemma \ref{l:local-holder}, let $h:B_{1/2} \to \R$ be the solution of the homogeneous equation \eqref{e:homogeneous1} with
\[ \int_{B_{1/2}} |u - h|^2 \dd x < \tau,\]
given by Lemma \ref{l:rho}. As above, we have 
\[  |h(x) - h(0)|  \leq C |x|,\]
for any $x\in B_{1/4}$. For $r_0 \in (0,\frac 1 4)$, we then have
\begin{equation}\label{e:triangle2}
\begin{split}
\fint_{B_{r_0}} |u(x) - h(0)|^2 \dd x &\leq 2\left( \fint_{B_{r_0}} |u(x) - h(x)|^2 \dd x + \fint_{B_{r_0}} |h(x) - h(0)|^2 \dd x\right)\\
 &\leq  2 \omega_d^{-1} r_0^{-d}\tau + 2C r_0^{2}.
\end{split}
\end{equation}
Choosing
\[ r_0 = \frac 1 4 \left(\frac {1} {C} \right)^{1/2}, \quad \tau = \frac {\omega_d r_0^{d}} {16},\]
we have
\[\fint_{B_{r_0}} |u(x) - h(0)|^p \dd x \leq \frac 1 4.\]
Applying \eqref{e:average} with $\mu = h(0)$, the conclusion of the lemma follows, with $\eps>0$ determined from our choice of $\tau$ via Lemma \ref{l:rho}.
\end{proof}

\begin{lemma}\label{l:global-BMO}
There exist $\eps>0$, $r_0\in (0,\frac 1 4)$ and $C>0$, depending only on $d$, $\lambda$, and $\Lambda$, such that if $u$ is a minimizer of $\mathcal J_{A,\varphi}$ over $H^1_g(B_1)$ with $\|\varphi\|_{L^{d/2}_{\rm weak}(B_1)} \leq \eps$ and $\fint_{B_1} u^2 \dd x \leq 1$, then for any $r\in (0,r_0]$,
\[ \fint_{B_r} |u- u_r|^2 \dd x \leq C.\]
\end{lemma}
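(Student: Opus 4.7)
The plan is to iterate Lemma \ref{l:local-BMO} dyadically across scales $r_0^k$, exploiting the fact that the exponent $q = d/2$ is \emph{precisely} the one for which the scaling in Lemma \ref{l:local-min} leaves $\|\varphi\|_{L^{d/2}_{\rm weak}}$ invariant. This parallels the proof of Lemma \ref{l:global-holder}, except that because $\alpha = 0$ we do not need to rescale the function vertically; we only subtract the running average at each step.

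Let $\eps, r_0$ be the constants from Lemma \ref{l:local-BMO}. I would prove by induction on $k\geq 1$ that
\[ \fint_{B_{r_0^k}} |u - u_{r_0^k}|^2 \dd x \leq 1, \qquad u_{r_0^k} := \fint_{B_{r_0^k}} u \dd x.\]
The base case $k=1$ is Lemma \ref{l:local-BMO} applied to $u$ itself, using the assumption $\fint_{B_1} u^2 \dd x \leq 1$. For the inductive step, define
\[ v_k(x) := u(r_0^k x) - u_{r_0^k}, \qquad x\in B_1.\]
The inductive hypothesis, after the change of variables $y=r_0^k x$, gives $\fint_{B_1} v_k^2 \dd x \leq 1$. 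Lemma \ref{l:local-min} (with $\kappa=1$, $r=r_0^k$, $\gamma = u_{r_0^k}$) shows that $v_k$ minimizes $\mathcal J_{\tilde A, \tilde\varphi, \tilde\gamma}$ over $H^1_{\tilde g}(B_1)$, where $\tilde\varphi(x) = r_0^{2k}\varphi(r_0^k x)$. The borderline exponent then kicks in: by scaling of weak-$L^{d/2}$,
\[ \|\tilde\varphi\|_{L^{d/2}_{\rm weak}(B_1)} = \|\varphi\|_{L^{d/2}_{\rm weak}(B_{r_0^k})} \leq \|\varphi\|_{L^{d/2}_{\rm weak}(B_1)} \leq \eps,\]
so Lemma \ref{l:local-BMO} applies to $v_k$. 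This yields $\fint_{B_{r_0}} |v_k - (v_k)_{r_0}|^2 \dd x \leq 1$, and a direct computation shows $(v_k)_{r_0} = u_{r_0^{k+1}} - u_{r_0^k}$, so after undoing the change of variables we obtain the bound at scale $r_0^{k+1}$, closing the induction.

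To deduce the statement for arbitrary $r\in (0,r_0]$, I would pick $k\geq 1$ with $r_0^{k+1} < r \leq r_0^k$. Using the elementary inequality \eqref{e:average} with $\mu = u_{r_0^k}$ and the inclusion $B_r \subset B_{r_0^k}$,
\[ \fint_{B_r} |u - u_r|^2 \dd x \leq 4 \fint_{B_r} |u - u_{r_0^k}|^2 \dd x \leq 4\, \frac{|B_{r_0^k}|}{|B_r|} \fint_{B_{r_0^k}} |u - u_{r_0^k}|^2 \dd x \leq 4 r_0^{-d},\]
which proves the lemma with $C = 4r_0^{-d}$.

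The main obstacle here is conceptually small: one just has to verify that the scaling invariance of $\|\varphi\|_{L^{d/2}_{\rm weak}}$ really does close the induction without any geometric factor accumulating, which in turn is why $d/2$ is the critical exponent. The only technical point to check carefully is the bookkeeping that identifies $(v_k)_{r_0}$ with $u_{r_0^{k+1}} - u_{r_0^k}$, so that the mean at the rescaled level $r_0$ translates back to the mean at the next dyadic level $r_0^{k+1}$; everything else is a routine change of variables combined with the $\alpha = 0$ version of the argument in Section \ref{sct Holder}.
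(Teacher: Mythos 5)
Your proof is correct and follows essentially the same dyadic-iteration strategy as the paper: the same induction statement, the same rescaled function $v_k$, the same exploitation of the scaling invariance of $\|\cdot\|_{L^{d/2}_{\rm weak}}$, and the same final volume-ratio comparison at an intermediate radius. Your last step is in fact slightly more careful than the paper's, since you explicitly invoke \eqref{e:average} to pass from $u_{r_0^k}$ to $u_r$, whereas the paper leaves that reduction implicit.
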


\begin{proof}
Let $\eps$ and $r_0$ be the constants from Lemma \ref{l:local-BMO}, and let $u$ be a minimizer of $\mathcal J_{A,\varphi}$ over $H^1_g(B_1)$. Our goal is to show by induction that
\begin{equation}\label{e:induction2}
 \fint_{B_{r_0^k}} |u- u_{r_0^k}|^2 \dd x \leq 1, \quad k = 1,2, \ldots.
\end{equation}

The base case $k=1$ follows directly from Lemma \ref{l:local-BMO}. Assuming that \eqref{e:induction2} holds for some $k\geq 1$, define
\[ v(x) =  u(r_0^k x) - u_{r_0^k} , \quad x\in B_1.\] 
By Lemma \ref{l:local-min}, $v$ is a minimizer of $\mathcal J_{A_k,\varphi_k,\gamma_k}$ over $H^1_{g_k}(B_1)$, where 
\[ \begin{split}
A_k(x) &= A(x_0+r_0x),\\
\varphi_k(x) &= r_0^{2k} \varphi(r_0^k x),\\
\gamma_k &=  -u_{r_0^k},\\
g_k(x) &=  u(r_0^k x) - u_{r_0^k}, \quad x\in \partial B_1.
\end{split}\]
The inductive hypothesis \eqref{e:induction2} implies 
\[\fint_{B_1}v^2 \dd x =  \fint_{B_{r_0^k}} |u-u_{r_0^k}|^2 \dd x \leq 1.\]
With the choice $q= d/2$, the scaling of $\|\varphi_k\|_{L^q_{\rm weak}}$ is as follows:
\[\|\varphi_k\|_{L^{d/2}_{\rm weak}(B_1)} = \|\varphi\|_{L^{d/2}_{\rm weak}(B_{r_0^k})} \leq \|\varphi\|_{L^{d/2}_{\rm weak}(B_1)} < \eps,\]
and we can apply Lemma \ref{l:local-BMO} to conclude 
\begin{equation}\label{e:v-estimate}
 \fint_{B_{r_0}} |v - v_{r_0}|^2 \dd x \leq 1 ,
\end{equation}
A quick calculation shows $v_{r_0} = u_{r_0^{k+1}} - u_{r_0^k}$. Therefore, \eqref{e:v-estimate} implies
\[ \fint_{B_{r_0^{k+1}}} |u -u_{r_0^{k+1}}|^2 \dd x \leq  1,\]
and we conclude \eqref{e:induction2} holds for all $k$. 

Now, for $0<r\leq r_0$, choose $k$ such that $r_0^{k+1} < r \leq r_0^k$. By \eqref{e:induction2},
\[ \fint_{B_r} |u - u_{r_0^k}|^2 \dd x \leq  \left(\frac {r_0^k} r \right)^d \fint_{B_{r_0^k}} |u-u_{r_0^k}|^2 \dd x \leq r_0^{-d}, \]
and the proof is complete. 
\end{proof}

Now we apply a scaled version of Lemma \ref{l:global-BMO} to conclude interior BMO regularity for $u$:

\begin{theorem}\label{t:BMO}
For any $\varphi \in L^{d/2}(\Omega)$ and $g \in H^{1/2}(\partial \Omega)$, and any minimizer $u$ of $\mathcal J_{A,\varphi}$ over $H^1_g(\Omega)$, there holds for any $\Omega' \subset \subset \Omega$,
\[ \| u\|_{BMO(\Omega')} \leq C \|u\|_{L^2(\Omega)},\]
where the constant $C$ depends only on universal constants, $\|\varphi\|_{L^{d/2}_{\rm weak}(\Omega)}$, and $\Omega'$.
\end{theorem}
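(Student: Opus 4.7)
The proof runs parallel to that of Theorem \ref{t:holder-prelim}: for each $x_0\in\Omega'$, I will choose a scaling $w(x)=\kappa u(x_0+rx)$ on $B_1$ for suitable $r>0$ and $\kappa>0$ so that Lemma \ref{l:global-BMO} applies to $w$; then translating back yields a BMO estimate on $u$ in balls centered at $x_0$ with radius up to $rr_0$, where $r_0$ is the radius from Lemma \ref{l:global-BMO}. By Lemma \ref{l:local-min}, $w$ minimizes $\mathcal J_{\tilde A,\tilde\varphi}$ with $\tilde\varphi(x)=\kappa^2 r^2\varphi(x_0+rx)$, and the smallness requirement becomes
\[
\|\tilde\varphi\|_{L^{d/2}_{\rm weak}(B_1)}=\kappa^2\|\varphi\|_{L^{d/2}_{\rm weak}(B_r(x_0))}\le \eps,
\]
where $\eps$ is the universal threshold from Lemma \ref{l:global-BMO}.

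The key difficulty is that at the critical exponent $q=d/2$ the $L^q_{\rm weak}$ norm is scale invariant (no shrinking $r^{2-d/q}$ factor), so the subcritical strategy of choosing $r$ small to force $\|\tilde\varphi\|_{L^q_{\rm weak}(B_1)}\le\eps$ breaks down. I would recover smallness by exploiting the \emph{strong} assumption $\varphi\in L^{d/2}(\Omega)$: by absolute continuity of the Lebesgue integral together with compactness of $\overline{\Omega'}$, there exists $r_1=r_1(\eps,\varphi,\Omega')>0$ such that $\|\varphi\|_{L^{d/2}(B_r(x_0))}\le\eps$ for every $x_0\in\Omega'$ and every $r\le r_1$. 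Setting $r=\min\!\bigl(r_1,\tfrac12\mathrm{dist}(\Omega',\partial\Omega)\bigr)$ and $\kappa=\min\!\bigl(1,(\fint_{B_r(x_0)}u^2\,\dd x)^{-1/2}\bigr)$ then guarantees both $\fint_{B_1}w^2\,\dd x\le 1$ and, since $\kappa\le 1$, $\|\tilde\varphi\|_{L^{d/2}_{\rm weak}(B_1)}\le\|\varphi\|_{L^{d/2}(B_r(x_0))}\le\eps$.

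With the hypotheses of Lemma \ref{l:global-BMO} in force, one obtains $\fint_{B_s}|w-w_{B_s}|^2\,\dd x\le C$ for all $s\le r_0$, which translates via the change of variables $y=x_0+rx$ into
\[
\fint_{B_{rs}(x_0)}|u-u_{B_{rs}(x_0)}|^2\,\dd y\le C/\kappa^2\le C\max\!\Bigl(1,\tfrac{\|u\|_{L^2(\Omega)}^2}{|B_r|}\Bigr).
\]
Because $r$ is bounded below by a constant depending only on universal data, $\|\varphi\|_{L^{d/2}_{\rm weak}(\Omega)}$, and $\Omega'$, this delivers the desired BMO control on balls of radius at most $rr_0$; the remaining (larger) balls in $\Omega'$ are handled by the trivial variance inequality $\fint_B|u-u_B|^2\,\dd x\le \fint_B u^2\,\dd x\le \|u\|_{L^2(\Omega)}^2/|B|$. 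Taking the supremum over all such balls yields $\|u\|_{BMO(\Omega')}\le C\|u\|_{L^2(\Omega)}$ with $C$ of the advertised form. The principal obstacle is precisely this critical scaling: the absence of a shrinking factor in the rescaled $L^{d/2}_{\rm weak}$ norm prevents a self-similar iteration with geometric decay of oscillation, which is why the conclusion is BMO rather than Hölder and why the proof relies on \emph{strong} $L^{d/2}$ integrability rather than merely the weak version.
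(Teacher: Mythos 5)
Your overall framework --- rescale to $B_1$ and feed the rescaled minimizer into Lemma~\ref{l:global-BMO} --- matches the paper's, but you diverge at the crucial step of how smallness of $\tilde\varphi$ is achieved, and your version introduces a gap in the constant's dependence. The paper does not shrink $r$ at all: it fixes $r = \min\{\tfrac12\dist(x_0,\partial\Omega),\,1\}$ and folds the critical-scaling obstruction entirely into the amplitude factor, taking
\[
\kappa^2 = \min\left\{ \frac {1} {\fint_{B_r(x_0)} u^2\,\dd x},\ \frac{\eps}{\|\varphi(x_0+r\,\cdot)\|_{L^{d/2}_{\rm weak}(B_1)}}\right\},
\]
so that $\|\tilde\varphi\|_{L^{d/2}_{\rm weak}(B_1)}\le\eps$ using \emph{only} the weak $L^{d/2}$ quasi-norm; after unwinding, $1/\kappa^2$ is controlled by $\|u\|^2_{L^2(\Omega)}$, $\|\varphi\|_{L^{d/2}_{\rm weak}(\Omega)}$, and $\dist(\Omega',\partial\Omega)$, matching the dependence advertised in the statement.

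Your substitute --- choosing $r\le r_1$ via absolute continuity of $\int|\varphi|^{d/2}$ and then taking $\kappa\le 1$ --- does produce $\|\tilde\varphi\|_{L^{d/2}_{\rm weak}(B_1)}\le\eps$, but the radius $r_1$ you obtain is \emph{not} controlled by $\|\varphi\|_{L^{d/2}_{\rm weak}(\Omega)}$. Equi-integrability of $|\varphi|^{d/2}$ is a qualitative consequence of $\varphi\in L^{d/2}$ with no quantitative link to the weak quasi-norm: for $\varphi_n(x)=|x|^{-2}\,1_{\{|x|>1/n\}}$ on $B_1$, the norms $\|\varphi_n\|_{L^{d/2}_{\rm weak}}$ stay uniformly bounded while $\int_{B_r}|\varphi_n|^{d/2}\,\dd x\sim\log(nr)\to\infty$, so $r_1(\varphi_n)\to 0$. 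You even write $r_1=r_1(\eps,\varphi,\Omega')$ correctly, yet a few lines later claim that ``$r$ is bounded below by a constant depending only on universal data, $\|\varphi\|_{L^{d/2}_{\rm weak}(\Omega)}$, and $\Omega'$,'' which is false. As a result, the constant $C$ you produce depends on $\varphi$ through more than its weak $L^{d/2}$ norm, which is strictly weaker than the theorem asserts; the remedy is precisely the paper's device of building $\varphi$ into the choice of $\kappa$ rather than $r$. On the positive side, your closing remark covering the balls of radius larger than $rr_0$ via $\fint_B|u-u_B|^2\le\fint_B u^2$ is correct and fills a step the paper leaves implicit.
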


\begin{proof}
Letting $\eps$ be the constant from Lemma \ref{l:global-BMO}, for any $x_0 \in \Omega$, we define
\[ w(x) := \kappa u(x_0+rx),\]
where 
\[ r = \min\left\{\frac 1 2 \mathrm{dist}(x_0,\partial B), 1\right\}, \quad \kappa^2 =  \min\left\{ \frac {1} {\fint_{B_r(x_0)} u^2 \dd x} ,   \frac \eps {\|\varphi(x_0 + rx)\|_{L^{d/2}_{\rm weak}(B_1)}}\right\}\]
Then, by Lemma \ref{l:local-min}, $w$ minimizes $\mathcal J_{\tilde A,\tilde \varphi}$ with $\tilde A(x) = A(x_0+rx)$, $\tilde \varphi(x) = r^2 \kappa^2 \varphi(x_0+rx)$, $\fint_{B_1} w^2 \leq 1$, and $\|\tilde \varphi\|_{L^{d/2}_{\rm weak}(B_1)} \leq \eps$. Applying Lemma \ref{l:global-BMO} and translating back to $u$, we conclude
\[ \fint_{B_r(x_0)} |u-u_{B_r(x_0)}|^2 \dd x \leq C\int_{\Omega} u^p \dd x,\]
for a constant $C$ depending only on universal constants and $\dist(x_0,\partial\Omega)$. 
\end{proof}

\section{Improved H\"older exponent}\label{s:improved}

For the one-phase problem, at free boundary points, we are able to improve the H\"older estimate to obtain the sharp exponent $\alpha = 1-d /(2q)$, regardless of the regularity theory available for $A$-harmonic functions.

First, we revisit the compactness argument of Lemma \ref{l:rho}, equipped with the extra regularity provided by Lemma \ref{l:global-holder}. 

\begin{lemma}\label{l:rho2}
Given $\tau>0$, there exists $\eps = \eps(d,\lambda,\Lambda,q,\tau) >0$ such that for any $g \in H^{1/2}(\partial B_1)$ with $g \geq 0$, any $A\in M_{\lambda,\Lambda}(B_1)$, any $\varphi$ such that $\|\varphi\|_{L^q_{\rm weak}(B_1)} < \eps$, and any minimizer $u$ of $\mathcal J_{A,\varphi}$ over $H_g^{1}(B_1)$ with $\fint_{B_1} u^2 \dd x \leq 1$ and $u(0) = 0$, there holds
\[\sup_{B_{1/10}} u \leq \tau.\]
\end{lemma}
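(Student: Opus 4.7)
The proof will be a contradiction/compactness argument in the spirit of Lemma \ref{l:rho}, but now exploiting the H\"older continuity produced by Lemma \ref{l:global-holder} together with the strong maximum principle for divergence-form elliptic equations. Suppose the conclusion fails. Then for some $\tau_0 > 0$ there exist sequences $\eps_n \downarrow 0$, $g_n \in H^{1/2}(\partial B_1)$ with $g_n \geq 0$, $A_n \in M_{\lambda,\Lambda}(B_1)$, $\varphi_n$ with $\|\varphi_n\|_{L^q_{\rm weak}(B_1)} \leq \eps_n$, and minimizers $u_n$ of $\mathcal J_{A_n,\varphi_n}$ over $H^1_{g_n}(B_1)$ satisfying $\fint_{B_1} u_n^2\,\dd x \leq 1$, $u_n(0) = 0$, but
\[ \sup_{B_{1/10}} u_n > \tau_0. \]

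\textbf{Extracting a limit.} Since $g_n \geq 0$, replacing $u_n$ by $u_n^+$ can only decrease the Dirichlet part of the energy while leaving $\{u_n>0\}$ unchanged; minimality therefore forces $u_n \geq 0$. For $n$ large enough, $\eps_n$ is below the threshold of Lemma \ref{l:global-holder}, so each $u_n$ is uniformly bounded and H\"older continuous on $B_{r_0}$ with constants depending only on universal data. By Arzel\`a--Ascoli and the Caccioppoli estimate of Lemma \ref{l:caccioppoli}, a subsequence (still denoted $u_n$) converges uniformly on $\overline{B_{r_0}}$, weakly in $H^1(B_{1/2})$ and strongly in $L^2(B_{1/2})$ to a nonnegative limit $u_0$. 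A further subsequence of $A_n$ converges weakly in $L^2$ to some $A_0 \in M_{\lambda,\Lambda}(B_{1/2})$.

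\textbf{Identification of the limit.} Repeating verbatim the variational argument from Lemma \ref{l:rho} (the term involving $\varphi_n$ vanishes since $\|\varphi_n\|_{L^r(B_1)} \lesssim \|\varphi_n\|_{L^q_{\rm weak}(B_1)} \to 0$ for any $1 < r < q$), we conclude that $u_0$ is a weak solution of $\nabla \cdot (A_0(x) \nabla u_0) = 0$ in $B_{1/2}$. Uniform convergence of $u_n$ on $B_{r_0}$ together with $u_n(0) = 0$ forces $u_0(0) = 0$. Since $u_0 \geq 0$ attains its minimum at the interior point $0$, the strong maximum principle for divergence-form elliptic equations with bounded measurable coefficients (e.g.\ \cite[Theorem 8.19]{gilbargtrudinger}) yields $u_0 \equiv 0$ in $B_{1/2}$. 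Uniform convergence on $\overline{B_{1/10}}$ then gives $\sup_{B_{1/10}} u_n \to 0$, contradicting $\sup_{B_{1/10}} u_n > \tau_0$ and completing the proof.

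\textbf{Main obstacle.} The only delicate point is ensuring enough compactness at the origin to propagate the pointwise hypothesis $u_n(0)=0$ to the limit: this is precisely what Lemma \ref{l:global-holder} provides, since it yields a universal modulus of continuity at $0$ for all $n$ large. Once this is in hand, the strong maximum principle upgrades a pointwise equality at a single interior point to identical vanishing, which is the mechanism that forces $\sup_{B_{1/10}} u_n$ below any prescribed threshold.
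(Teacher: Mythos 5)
Your proof is correct and follows essentially the same route as the paper: a contradiction/compactness argument in which the limit $u_0$ is identified as an $A_0$-harmonic function via the same variational step as in Lemma \ref{l:rho}, uniform convergence is obtained from the universal H\"older estimate of Lemma \ref{l:global-holder} so the pointwise condition $u_n(0)=0$ passes to the limit, and the strong maximum principle forces $u_0\equiv 0$, contradicting $\sup_{B_{1/10}}u_n>\tau_0$. Your derivation of $u_n\ge 0$ by comparing $u_n$ with $u_n^+$ (and concluding $u_n^-\in H^1_0$ with vanishing gradient) is a clean and standard way to get this, slightly more explicit than the paper's appeal to the maximum principle via Lemma \ref{l:subsolution}. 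One small imprecision worth tightening: you state uniform convergence on $\overline{B_{r_0}}$ with $r_0$ from Lemma \ref{l:global-holder}, but then invoke it on $\overline{B_{1/10}}$, and there is no guarantee $r_0\ge 1/10$; the correct statement, which the paper also uses, is that the interior H\"older estimate (via Theorem \ref{t:holder-prelim}, or by translating Lemma \ref{l:global-holder} to interior centers and covering) gives a uniform $C^\alpha(B_{1/2})$ bound, from which uniform convergence on $\overline{B_{1/10}}$ follows.
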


\begin{proof}
Assume there is no $\eps$ satisfying the conclusion. Then, as in the proof of Lemma \ref{l:rho}, there exist $\eps_n\to 0$, $g_n\in H^{1/2}(\partial B_1)$, $A_n\in M_{\lambda,\Lambda}(B_1)$, and $\varphi_n$ with $\|\varphi_n\|_{L^q(B_1)} = \eps_n$, with minimizers $u_n$ of $\mathcal J_{A_n,\varphi_n}$ over $H^1_{g_n}(B_1)$, with $u_n(0) = 0$ and $\fint_{B_1} u_n^2 \dd x \leq 1$. Since $g_n\geq 0$, the maximum principle implies $u_n\geq 0$. The contradiction assumption implies
\begin{equation}\label{e:contradiction2}
 \sup_{B_{1/10}} u_n > \tau,
 \end{equation}
for all $n$.

Lemma \ref{l:caccioppoli} provides weak-$H^1(B_{1/2})$ compactness, so $u_n$ converge strongly in $L^2(B_{1/2})$ to a limit $u_0$. As in the proof of Lemma \ref{l:rho}, we see $u_0$ is a weak solution of \eqref{e:homogeneous} in $B_{1/2}$. In addition,  the $C^\alpha$ estimate of Lemma \ref{l:global-holder} implies $\|u_n\|_{C^\alpha(B_{1/2})}$ is bounded independently of $n$ for some $\alpha \in (0,1)$, which implies $u_n \to u_0$ uniformly in $B_{1/2}$ (up to a subsequence). For the limit $u_0$, this implies $u_0\geq 0$ in $B_{1/2}$ and $u_0(0) = 0$. Since the trace of $u_0$ on $\partial B_{1/2}$ is nonnegative, the strong maximum principle implies $u_0$ is identically $0$ in $B_{1/2}$. Uniform convergence implies that for $n$ large enough,
\[ \sup_{B_{1/10}} u_n \leq \tau,\]
contradicting \eqref{e:contradiction2}. 
\end{proof}

Now we iterate Lemma \ref{l:rho2} to prove H\"older regularity at boundary points, with the optimal exponent:

\begin{proof}[Proof of Theorem \ref{t:improved}]
Take $\tau = 1/10$, and let $\eps$ be the corresponding constant from Lemma \ref{l:rho2}. As in the proof of Theorem \ref{t:holder-prelim}, we choose $\kappa$ and $r$ such that, after replacing $u$ with
\[w(x):= \kappa u(x_0 + rx),\]
we can ensure $w(0) = 0$, $\fint_{B_1} w^2 \dd x \leq 1$, and that $w$ minimizes $\mathcal J_{A,\varphi}$ over $H^1_g(B_1)$ with $\|\varphi\|_{L^{q}_{\rm weak}(B_1)}\leq \eps$ and $g\geq 0$.

Applying Lemma \ref{l:rho2}, we have
\[ \sup_{B_{1/10}} w \leq \frac 1 {10}.\]
Next, with $\alpha = 1-d/(2q)$, assume by induction that for some positive integer $k$,
\begin{equation}\label{e:induction3}
 \sup_{B_{10^{-k}}} w \leq \frac 1 {10^{k\alpha}}.
 \end{equation}
By Lemma \ref{l:local-min}, the function 
\[w_k(x) := 10^{k\alpha} w\left(\frac x {10^k}\right)\]
minimizes $\mathcal J_{A_k,\varphi_k}$ over $H^1_{g_k}(B_1)$ with the same value of $\rho$, and 
\[ \begin{split}
\varphi_k(x) &= 10^{2k(1-\alpha)}\varphi(10^{-k}x),\\
A_k(x) &= A(10^{-k} x),\\
g_k(x) &= 10^{k\alpha} w(10^{-k} x), \quad x\in \partial B_1.
\end{split}\]
The inductive hypothesis gives
\[\fint_{B_1} w_k^2 \dd x = 10^{2k\alpha} \fint_{B_{10^{-k}}} w^2 \dd x \leq 1,\]
and our choice of $\alpha = 1-d/(2q)$ gives
\[ \|\varphi_k\|_{L^q_{\rm weak}(B_1)} =  10^{2k(1-\alpha)- kd/q} \|\varphi\|_{L^q_{\rm weak}(B_{10^{-k}})} \leq \eps.       \]
Lemma \ref{l:rho2} now implies $\sup_{B_{1/10}} w_k \leq 1/10$, or
\[ \sup_{B_{10^{-(k+1)}}} w\leq \frac 1 {10^{(k+1)\alpha}},\]
and we have established that \eqref{e:induction3} holds for all $k=1,2,\cdots$. 

Now, for any $x \in B_{1/10}$, there is some $k$ such that $10^{-(k+1)} < |x| \leq 10^{-k}$. By \eqref{e:induction3}, there holds
\[ |w(x)| \leq \sup_{B_{10^{-k}}} \leq \frac 1 {10^{k\alpha}} \leq 10^\alpha|x|^{\alpha}.\]
After translating from $w$ to $u$, the proof is complete. 
\end{proof}

\section{Geometry of the free boundary} \label{sct geometry}

The following lemma is a nondegeneracy estimate for $u$ near free boundary points. 

\begin{lemma}\label{l:nondegeneracy}
Let $u$ be a minimizer of $\mathcal J_{A, \varphi}$ over $H^1_g(\Omega)$ with $g$ and $u$ nonnegative. 
Let $x_0 \in \partial \{u>0\}$ be a free boundary point, and for some $p>1$ and $\sigma \in [0,d)$, assume that there exist $c_0,r_0>0$ such that
\begin{equation}\label{e:growth}
{|\{\varphi(x) > t\}\cap B_{r}(x_0)| \geq \min\left(c_0 r^\sigma t^{-p}, |B_r(x_0)|\right), \quad \text{for all } t>0, r\in (0,r_0).}
\end{equation}
Then, for any $x\in \Omega$ with $u(x) > 0$ such that $r:= |x-x_0| = \dist(x,\partial\{u>0\}) \leq r_0/2$, the estimate
\[
{ u(x) \geq C |x-x_0|^{1-(d-\sigma)/(2p)}.}
\]
holds, with $C>0$ depending only on $d$, $p$, $\lambda$, $\Lambda$, and $c_0$.
\end{lemma}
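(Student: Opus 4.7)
The plan is a competitor argument exploiting the distribution bound \eqref{e:growth} through a layer-cake computation, combined with a subsolution Caccioppoli estimate. Using Lemma \ref{l:local-min}, I would first reduce to $r = 1$, $x_0 = 0$, and $|x| = 1 = \dist(x, \partial\{u>0\})$. A short calculation shows the exponent $\alpha := 1-(d-\sigma)/(2p)$ is precisely the one for which the rescaled potential $\tilde\varphi(y) := r^{2-2\alpha}\varphi(x_0+ry)$ obeys the same distributional lower bound $|\{\tilde\varphi > t\} \cap B_1(0)| \geq \min(c_0 t^{-p}, |B_1|)$ as $\varphi$. The claim reduces to $u(x) \geq C_*$ for a universal $C_* > 0$.

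The central step is the competitor $w := u\eta$, where $\eta \in [0, 1]$ is a smooth cutoff with $\eta \equiv 0$ on $B_{1/2}(0)$, $\eta \equiv 1$ outside $B_{3/4}(0)$, and $|\nabla\eta| \leq C$. Then $w$ is admissible, $\{u>0\}\setminus\{w>0\} = \{u>0\}\cap B_{1/2}(0)$, and minimality $\mathcal J(u) \leq \mathcal J(w)$ yields
\[
\int_{\{u>0\}\cap B_{1/2}(0)} \tilde\varphi \, \dd y \;\le\; \int \bigl[A\nabla(u\eta)\cdot\nabla(u\eta) - A\nabla u\cdot\nabla u\bigr] \dd y.
\]
Expanding the right-hand side and applying Young's inequality, the $|\nabla u|^2$ contributions are absorbed via the standard subsolution Caccioppoli estimate, obtained by testing Lemma \ref{l:subsolution} against a suitable cutoff multiple of $u$, yielding an upper bound of the form $\leq C \int_{B_2(0)} u^2 \,\dd y$ with universal $C$.

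For the lower bound on the left-hand side, the inclusion $B_1(x)\subset\{u>0\}$ together with a direct spherical-cap computation gives $|\{u>0\}\cap B_{1/2}(0)|\geq c_1|B_{1/2}|$ for a universal $c_1>0$. A layer-cake argument based on \eqref{e:growth} shows that any measurable $E\subset B_{1/2}(0)$ with $|E|\geq c_1|B_{1/2}|$ satisfies $\int_E \tilde\varphi\,\dd y\geq c_2>0$: picking the threshold $t^\star\sim(c_0/|B_{1/2}|)^{1/p}$ makes $|\{\tilde\varphi>t^\star\}\cap B_{1/2}|\sim|B_{1/2}|$, hence $|\{\tilde\varphi>t^\star\}\cap E|\gtrsim|E|$, so $\int_E\tilde\varphi\geq t^\star\,|\{\tilde\varphi>t^\star\}\cap E|\gtrsim c_2$.

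Combining, $c_2 \leq C\int_{B_2(0)} u^2\,\dd y$. To extract a contradiction under the assumption $u(x)<\delta$ small, I would control $\int_{B_2(0)} u^2$ by bounding $u$ piecewise: Harnack inside $B_1(x)\subset\{u>0\}$ gives $u\leq C\delta$ on $B_{1/2}(x)$; a Harnack chain along the segment from $x$ toward $x_0$ extends this bound into a conical neighborhood inside $\{u>0\}$; the rest of $B_2(0)\cap\{u>0\}$ is handled by the sharp Hölder estimate at free boundary points from Theorem \ref{t:improved} (applicable in the paper's setting where $\varphi\in L^{q_+}_{\rm weak}$, giving $u(y)\lesssim |y-x_0|^{1-d/(2q_+)}$). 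The main obstacle lies precisely in this final step: controlling $u$ globally on $B_2(0)\cap\{u>0\}$ whose geometry is not a priori controlled. Once this is accomplished, for $\delta$ sufficiently small one obtains $\int_{B_2(0)}u^2 \leq C'\delta^2$, contradicting $c_2 \leq C\int_{B_2(0)}u^2$.
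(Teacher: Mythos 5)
Your layer-cake lower bound on $\int_{\{u>0\}\cap B_{1/2}}\tilde\varphi$ is sound and is in the same spirit as the paper's, and the scaling reduction with the exponent $\alpha = 1-(d-\sigma)/(2p)$ is correctly identified. However, the competitor choice $w = u\eta$ centered at $x_0$ creates a gap that you honestly flag at the end, and that gap is fatal as stated: the energy comparison you set up forces you to prove $\int_{B_2(0)} u^2 \lesssim \eps^2$, and this is false in general. The H\"older estimate of Theorem \ref{t:improved} centered at $x_0$ only gives $u(y)\lesssim |y-x_0|^{\alpha_1}$, which at distances of order $1$ is $O(1)$, not $O(\eps)$; nothing prevents $u$ from being order one on other components of $\{u>0\}\cap B_2(0)$ far from $x$, while $u(x)=\eps$. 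A Harnack chain from $x$ reaches only a connected tube inside $\{u>0\}$, whose geometry is not under control. So the contradiction cannot be obtained this way.

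The paper avoids this entirely by a structurally different competitor, centered at the interior point $x_\eps:=x$ rather than at $x_0$. Since $B_{3r/4}(x_\eps)\subset\{u>0\}$ and $u$ is $A$-harmonic there, the Harnack inequality gives the \emph{pointwise} bound $u\leq C_0\eps$ on $\partial B_{r/2}(x_\eps)$. One then takes the truncation competitor $v:=\min\{u,\,C_0\eps\,\zeta\}$, where $\zeta$ vanishes on $B_{r/4}(x_\eps)$ and equals $1$ outside $B_{r/2}(x_\eps)$. Because $v=u$ on $\partial B_{r/2}(x_\eps)$, the competitor is admissible, and the key point is that the Dirichlet-energy defect is bounded pointwise: on $\{v<u\}$ one has $\nabla v = C_0\eps\nabla\zeta$, so $\nabla v\cdot(A\nabla v)-\nabla u\cdot(A\nabla u)\leq \Lambda C_0^2\eps^2|\nabla\zeta|^2$, and on $\{v=u\}$ the defect is zero. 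Thus the right-hand side of the energy comparison is $\lesssim \eps^2 r^{d-2}$ with no appearance of $\int u^2$. The left-hand side, $\int\varphi(1_{\{u>0\}}-1_{\{v>0\}})$, is bounded below by an integral of $\varphi$ over a set of definite measure near $x_\eps$; your layer-cake argument is then applied with \eqref{e:growth} on the larger ball $B_{2r}(x_0)\supset B_{r/2}(x_\eps)$, producing the lower bound $\gtrsim k\,r^{(\sigma-d)/p}\cdot r^{d}$. Combining gives $\eps^2\geq C r^{2-(d-\sigma)/p}$ directly. In short: replace the multiplicative cutoff $u\eta$ by the truncated barrier $\min(u,C_0\eps\zeta)$ anchored at $x_\eps$, and the problematic global term $\int u^2$ never arises.
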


{ In \eqref{e:growth}, the case $\sigma = 0$ corresponds, for example, to an inverse power function $\varphi(x) = |x-x_0|^{-d/p} \in L^p_{\rm weak}(\Omega)$. In such a case, Lemma \ref{l:nondegeneracy} gives a lower bound for $u$ with exponent $1-d/(2p)$, which matches the asymptotics of the H\"older estimate of Theorem \ref{t:improved}. The cases with $\sigma > 0$ include choices such as $\varphi(x) = \dist(x,\Gamma)^{(m-d)/p}$ for a smooth $m$-dimensional submanifold $\Gamma$, with $1\leq m\leq d-1$. This example is also in $L^p_{\rm weak}(\Omega)$.}

\begin{proof}[Proof of Lemma \ref{l:nondegeneracy}]
Let $x = x_\eps$ satisfy $u(x_\eps) = \eps>0$. 
Our goal is to bound $\eps$ from below in terms of the proper power of $r$. 

Since $u$ satisfies the homogeneous equation \eqref{e:homogeneous} in $B_{3r/4}(x_\eps) \subset \{u>0\}$, the Harnack inequality, see for instance \cite[Theorem 5]{serrin1964}, gives a constant $C_0>0$, independent of $r$, with 
\[ 
u(x) \leq C_0 \eps, \quad x\in \partial B_{r/2}(x_\eps).
\]
Let $\zeta$ be a smooth cutoff function equal to 0 in $B_{r/4}(x_\eps)$ and equal to 1 outside $B_{r/2}(x_\eps)$, with $|\nabla \zeta|\leq C r^{-1}$. Define the test function 
\[ 
v(x) := \min\{ u(x), C_0\eps \zeta(x)\}.
\]
By construction, $v = u$ on $\partial B_{r/2}(x_\eps)$, so the minimizing property $\mathcal J_{A,\varphi}(u) \leq \mathcal J_{A,\varphi}(v)$ implies
\begin{equation}\label{e:u-v-compare}
\begin{split}
 \int_{B_{r/2}(x_\eps)} \varphi(x)\left(1_{\{u>0\}}- 1_{\{v>0\}}\right) \dd x &\leq \frac 1 2 \int_{B_{r/2}(x_\eps)} \left(  \nabla v\cdot (A(x)\nabla v) - \nabla u\cdot(A(x)\nabla u)\right) \dd x\\
 &\leq \frac 1 2 \Lambda C_0^2 \eps^2 \int_{B_{r/2}(x_\eps)\cap \{v> u\}} |\nabla \zeta|^2 \dd x\\
 &\leq \frac 1 2 \Lambda C_0^2 \eps^2 \omega_d (r/2)^{d} C r^{-2}.
 \end{split}
 \end{equation}
The left-hand side of this inequality can be bounded from below as follows, using our choice of $\zeta$:
\begin{equation}\label{e:phi-int-ineq}
 \int_{B_{r/2}(x_\eps)} \varphi(x)\left(1_{\{u>0\}}- 1_{\{v>0\}}\right) \dd x \geq \int_{B_{r/2}(x_\eps)\setminus B_{r/4}(x_\eps)} \varphi(x) \dd x.
 \end{equation}
To bound the last expression from below, we choose a suitable $t>0$ in the blow-up condition \eqref{e:growth} so that $\varphi>t$ in a large percentage of $B_{2r}(x_0)$, which must include at least half of $B_{r/2}(x_\eps)\setminus B_{r/4}(x_\eps)$. In more detail, 
let 
\[
\begin{split}
\mu_d &= \frac{|B_{r/2}(x_\eps)\setminus B_{r/4}(x_\eps)|} {|B_{2r}(x_0)|} \in (0,1),\\
k &= \left[\frac 1 {c_0} 2^{d} \omega_d(1-\mu_d/2)\right]^{-1/p},
\end{split}
\] 
where $\omega_d = |B_1|$.  Then, since $2r\leq r_0$, \eqref{e:growth} with $t = k r^{(\sigma-d)/p}$ implies 
\[
|\{\varphi(x)>k r^{(\sigma-d)/p}\}\cap B_{2r}(x_0)| \geq c_0 k^{-p} r^{d} = (1-\mu_d/2) |B_{2r}(x_0)|.
\] 
Therefore, 
\[
|\{\varphi(x) > k r^{(\sigma-d)/p}\}\cap (B_{r/2}(x_\eps)\setminus B_{r/4}(x_\eps))| \geq \frac 1 2  |B_{r/2}(x_\eps)\setminus B_{r/4}(x_\eps)|,
\] 
and 
\[ 
\int_{B_{r/2}(x_\eps)\setminus B_{r/4}(x_\eps)} \varphi(x) \dd x \geq \frac 1 2 |B_{r/2}(x_\eps)\setminus B_{r/4}(x_\eps)| k r^{(\sigma-d)/p} = c_d k r^{d(1-1/p)+\sigma/q}.
 \]
Note that $k$ depends only on $d$, $p$, and $c_0$. Combining this with \eqref{e:u-v-compare}, we finally have
\[ 
\eps^2 \geq C r^{2 -(d-\sigma)/p},
\]
for a constant $C>0$ as in the statement of the lemma, and the proof is complete.
\end{proof}

{Next, we have a generalization of Lemma \ref{l:nondegeneracy} for Bernoulli functions $\varphi$ that are singular as $x\to x_0$ only from certain directions. A typical example would be a $\varphi$ that blows up on one side of a hypersurface but is bounded on the other side, such as $\varphi(x) = 1 + 1_{\{x_1>0\}}x_1^{-1/p}$. We are mainly interested in this generalization so that we can rigorously prove our example in Section \ref{sct example} has a free boundary point where $|\nabla u|$ is infinite.

\begin{lemma}\label{l:nondegeneracy2}
Let $u$ be a minimizer of $\mathcal J_{A, \varphi}$ over $H^1_g(\Omega)$ with $g$ and $u$ nonnegative. Let $x_0 \in \partial \{u>0\}$ be a free boundary point, and for some $p>1$, $\sigma \in [0,d)$, and some cone $\Xi\subset \R^d$ with vertex at $x_0$, assume that there exist $c_0,r_0>0$ such that
\begin{equation}\label{e:growth2}
|\{\varphi(x) > t\}\cap B_{r}(x_0)\cap \Xi| \geq \min\left(c_0 r^\sigma t^{-p}, |B_r(x_0)\cap \Xi|\right), \quad \text{for all } t>0, r\in (0,r_0).
\end{equation}
Then, for any $x\in \Omega\cap \Xi$ with $u(x) > 0$ such that $r:= |x-x_0| = \dist(x,\partial\{u>0\}) \leq r_0/2$, the estimate
\[
u(x) \geq C |x-x_0|^{1-(d-\sigma)/(2p)}.
\]
holds, with $C>0$ depending only on $d$, $p$, $\lambda$, $\Lambda$, $c_0$, and $\Xi$.
\end{lemma}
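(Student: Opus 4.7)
The plan is to follow the proof of Lemma \ref{l:nondegeneracy} essentially verbatim, modifying only the step that bounds $\int \varphi \, \dd x$ from below on a neighborhood of $x_\eps := x$. Writing $\eps = u(x_\eps)$ and $r = |x_\eps - x_0|$, Harnack yields $u \leq C_0 \eps$ on $\partial B_{r/2}(x_\eps)$, and the cutoff $\zeta$ (with $\zeta = 0$ on $B_{r/4}(x_\eps)$, $\zeta = 1$ outside $B_{r/2}(x_\eps)$, and $|\nabla \zeta| \leq Cr^{-1}$) produces the competitor $v := \min\{u, C_0\eps\zeta\}$. The minimizing inequality $\mathcal J_{A,\varphi}(u) \leq \mathcal J_{A,\varphi}(v)$ then yields, as in \eqref{e:u-v-compare}--\eqref{e:phi-int-ineq},
\[
\int_{B_{r/2}(x_\eps) \setminus B_{r/4}(x_\eps)} \varphi(x)\,\dd x \;\leq\; C\,\eps^2\, r^{d-2},
\]
with $C = C(d,\lambda,\Lambda)$. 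Everything up to this point is unchanged from Lemma \ref{l:nondegeneracy}.

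The new step will be to lower bound this integral using only the cone-restricted growth \eqref{e:growth2}. Since $\Xi$ is a cone with vertex $x_0$ and $x_\eps \in \Xi$, the full ray from $x_0$ through $x_\eps$ is in $\Xi$; pick $y := x_0 + \tfrac{5}{8}(x_\eps-x_0)$, so that $|y-x_\eps| = \tfrac{3r}{8}$. Dilating by $r^{-1}$ about $x_0$, the question of fitting a ball $B_{\theta r}(y) \subset \Xi \cap (B_{r/2}(x_\eps)\setminus B_{r/4}(x_\eps))$ reduces to a fixed problem at unit scale, so there exists $\theta = \theta(\Xi) > 0$ for which the inclusion holds; geometrically, $\theta$ quantifies how deeply the direction $(x_\eps - x_0)/|x_\eps - x_0|$ sits inside $\Xi$. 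Choosing $t = k\, r^{(\sigma - d)/p}$ with $k = k(d,p,c_0,\Xi)$ large enough that
\[
c_0 (2r)^\sigma t^{-p} \;\geq\; |B_{2r}(x_0)\cap \Xi| - \tfrac{1}{2}|B_{\theta r}(y)|,
\]
the hypothesis \eqref{e:growth2} applied on $B_{2r}(x_0)\cap\Xi$ together with a pigeonhole argument inside $B_{\theta r}(y)\subset B_{2r}(x_0)\cap\Xi$ gives $|\{\varphi > t\}\cap B_{\theta r}(y)|\geq \tfrac{1}{2}|B_{\theta r}(y)|$, whence
\[
\int_{B_{r/2}(x_\eps)\setminus B_{r/4}(x_\eps)} \varphi\,\dd x \;\geq\; \tfrac{1}{2}\, t\, \omega_d\,(\theta r)^d \;=\; c\, r^{d + (\sigma - d)/p},
\]
with $c = c(d,p,c_0,\Xi) > 0$. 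Combining the two displays produces $\eps^2 \geq C\, r^{2 - (d-\sigma)/p}$, which is the claimed lower bound on $u(x)$.

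The main obstacle will be the geometric lemma that produces a ball $B_{\theta r}(y) \subset \Xi$ of radius proportional to $r$ inside the annular region. The argument above is essentially a scaling reduction exploiting that $\Xi$ is invariant under the dilations $z\mapsto x_0 + \lambda(z-x_0)$: at unit scale the problem becomes static and $\theta$ reduces to a positive constant depending only on $\Xi$ and on the position of $(x_\eps-x_0)/r$ on the unit sphere relative to $\partial\Xi$. Once this is accounted for, every other step is a direct transcription of the argument of Lemma \ref{l:nondegeneracy}, with the final constants now permitted to depend on $\Xi$.
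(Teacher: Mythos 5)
Your proposal follows the same method as the paper's proof, and the structure is correct. The paper's own proof applies the cone-restricted growth condition with $t = kr^{(\sigma-d)/p}$ and chooses $k$ so that the measure of $\{\varphi > t\}\cap B_{2r}(x_0)\cap\Xi$ covers all but half of $(B_{r/2}(x_\eps)\setminus B_{r/4}(x_\eps))\cap\Xi$; your variant is to cover all but half of an explicit sub-ball $B_{\theta r}(y)$ instead. Both then conclude by the same pigeonhole argument and rescaling computation. However, your write-up has one genuine issue and one trivial one.

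The genuine issue is your choice of $y = x_0 + \tfrac{5}{8}(x_\eps - x_0)$, which lies on the ray from $x_0$ through $x_\eps$. The largest $\theta$ with $B_{\theta r}(y)\subset\Xi$ satisfies $\theta r \leq \mathrm{dist}(y,\partial\Xi) = \tfrac{5}{8}r\,\mathrm{dist}\bigl((x_\eps - x_0)/r,\,\partial\Xi\bigr)$, which tends to zero as $x_\eps$ approaches $\partial\Xi$ — so $\theta$ is \emph{not} a constant depending only on $\Xi$, and the resulting $C$ in the conclusion would degenerate near $\partial\Xi$. You flag this tension yourself ("$\theta$ quantifies how deeply the direction $(x_\eps-x_0)/|x_\eps - x_0|$ sits inside $\Xi$"), but it contradicts your earlier assertion $\theta = \theta(\Xi) > 0$ and the dependence claimed in the lemma. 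The paper sidesteps this by pigeonholing directly against the set $(B_{r/2}(x_\eps)\setminus B_{r/4}(x_\eps))\cap\Xi$, whose measure is bounded below by $c(\Xi)\,r^d$ uniformly in the angular position of $x_\eps$ — even when $x_\eps$ is on $\partial\Xi$, that intersection still contains a fixed fraction of the annulus. To repair your version, either replace the ball by the set $(B_{r/2}(x_\eps)\setminus B_{r/4}(x_\eps))\cap\Xi$, or move the center $y$ toward the interior of $\Xi$ (e.g., to the point of the annulus maximizing distance to $\partial\Xi$) so that $\theta$ is uniform.

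The trivial slip is the phrase "with $k$ large enough": since $c_0(2r)^\sigma t^{-p} = c_0 2^\sigma k^{-p} r^d$ decreases in $k$, you need $k$ \emph{small} enough (or, as the paper does, fix $k$ by saturating the inequality). The subsequent pigeonhole step and the final exponent computation $\eps^2 \geq C r^{2-(d-\sigma)/p}$ are both correct as written.
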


\begin{proof}
This lemma is proven by the same method as Lemma \ref{l:nondegeneracy}, with the following alteration: to obtain a lower bound for the integral on the right in \eqref{e:phi-int-ineq}, one applies the condition \eqref{e:growth2} with $t = kr^{(\sigma-d)/p}$ and $k$ chosen depending on $\Xi$ so that 
\[
c_0k^{-p}r^d = \left | B_{2r}(x_0)\cap \Xi \right | - \frac 1 2 \left | (B_{r/2}(x_\eps)\setminus B_{r/4}(x_\eps)) \cap \Xi \right |.
\]
This implies 
$$
	\left |\{\varphi(x) > kr^{(\sigma- d)/p}\} \cap (B_{r/2}(x_\eps) \setminus B_{r/4}(x_\eps))\cap \Xi \right | \geq \frac 1 2 \left |(B_{r/2}(x_\eps)\setminus B_{r/4}(x_\eps)) \cap \Xi \right |,
$$ 
and the remainder of the argument proceeds as in the proof of Lemma \ref{l:nondegeneracy}
\end{proof}
}

Now we are ready to prove our last main result, which allows us to control the severity of cusps along the free boundary:

\begin{proof}[Proof of Theorem \ref{t:cusps}]

For a free boundary point $x_0$, let $q_+$ and $q_-$ be as in the statement of the Theorem, i.e. $\varphi \in L^{q_+}(B_{r_0}(x_0))$ and for $r\in (0,r_0)$,
\[
|\{\varphi(x)>t\} \cap B_{r}(x_0)| \geq { \min\left( c_0 t^{-q_-}, |B_r(x_0)|\right)},
\]
Then Theorem \ref{t:improved} implies $u(x) \leq C|x-x_0|^{1-d/(2q_+)}$ in $B_{r}(x_0)$, and Lemma \ref{l:nondegeneracy} implies $u(x) \geq c_{q_-} |x-x_0|^{1-d/(2q_-)}$ in $B_{r}(x_0)$. 

To keep the notation brief, we define $\alpha_1 = 1-d/(2q_+)$ and $\alpha_2 = 1-d/(2q_-)$. 

For fixed $r\in (0,r_0)$, let $x_1\in \partial B_r(x_0)$ be such that $u(x_1) \geq \dfrac {c_{q_-}} 2 r^{\alpha_2}$. By Theorem \ref{t:main-holder}, $u$ is H\"older continuous of order $\alpha_1$ at $x_1$. Define $\rho = r^{\alpha_2/\alpha_1}$. For $\kappa>0$ and $y\in B_{\kappa \rho}(x_1)$, we have
\[   u(y) \geq \frac {c_{q_-}} 2 r^{\alpha_2}  - C (\kappa \rho)^{\alpha_1} = \left( \frac {c_{q_-}} 2 - C\kappa ^{\alpha_1} \right) r^{\alpha_2},\]
which is strictly positive if we choose $\kappa = [c_{q_-}/(4C)] ^{1/\alpha_1}$. We conclude $B_{\kappa \rho}(x_1) \subset \{u>0\}$. A simple geometric argument gives $|B_r(x_0) \cap B_{\kappa \rho}(x_1)| \geq c(\kappa) \rho^d$, which implies
\begin{equation}\label{e:alpha1alpha2}
 \frac{|B_r(x_0) \cap \{u>0\}|}{ r^{d \alpha_2/\alpha_1}} \geq c,
 \end{equation}
for a constant $c>0$ depending on $q_-$ and $q_+$ but independent of $r$.

In the case $q_{+} = q_{-}$, we can in fact take $\alpha_1 = \alpha_2$ in \eqref{e:alpha1alpha2}, yielding a corner-like estimate.

For the lower bound on $|B_r(x_0)\cap \{u=0\}|$, arguing as in \cite{ACF1984quasilinear}, we compare $u$ to the function $v$ defined by
\[ \begin{cases}\nabla \cdot (A(x) \nabla v)= 0 &\text{ in } B_r(x_0),\\
v-u \in H_0^{1}(B_r(x_0)). &\end{cases}\]
Since $v-u\geq 0$ on $\partial B_r(x_0)$, the maximum principle and Lemma \ref{l:subsolution} imply $v\geq u$ in the interior of $B_r(x_0)$.

Using the Poincar\'e inequality, the identity $\int_{B_r(x_0)} \nabla (u-v)\cdot (A(x) \nabla v) \dd x = 0$, and the minimizing property of $u$, we have 
\begin{equation}\label{e:u-v}
\begin{split}
\frac 1 {r^2} \int_{B_r(x_0)} |u-v|^2 \dd x &\leq C\int_{B_r(x_0)} |\nabla (u-v)|^2 \dd x\\
&\leq C\lambda^{-1} \int_{B_r(x_0)} [\nabla u\cdot (A(x) \nabla u) - \nabla v\cdot (A(x) \nabla v)]\dd x\\
 &\leq C\lambda^{-1}\int_{B_r(x_0)} \varphi(x) 1_{\{u=0\}} \dd x\\
&\leq C \lambda^{-1}\|\varphi\|_{L^{q_+}(B_r(x_0))} |B_r(x_0)\cap \{u=0\}|^{1-1/q_+}.
\end{split}
\end{equation}

As above, let $\rho = r^{\alpha_2/\alpha_1}$. The Harnack inequality (applied to $v$) and the nondegeneracy of $u$ (Lemma \ref{l:nondegeneracy}) imply, for $\kappa\in (0,1)$ sufficiently small,
\[  v(x) \geq c \sup_{B_{r/2}(x_0)} u \geq c r^{\alpha_2}, \quad x\in B_{\kappa \rho(x_0)}. \] 
Since $u(x_0) = 0$, the H\"older continuity of $u$ implies $u\leq C (\kappa \rho)^{\alpha_1}$ in $B_{\kappa \rho}(x_0)$. We therefore have
\[ v(x) - u(x) \geq c r^{\alpha_2} - C(\kappa \rho)^{\alpha_1} = (c-C\kappa^{\alpha_1}) r^{\alpha_2} \geq \frac c 2 r^{\alpha_2},\]
if we choose $\kappa = [c/(2C)]^{1/\alpha_1}$. With \eqref{e:u-v}, we now have
\[ |B_r(x_0) \cap \{u=0\}|^{1-1/q_+} \geq K r^{-2+ 2\alpha_2} \rho^d = K r^{2(\alpha_2-1) + d \alpha_2/\alpha_1},\]
or
\[ |B_r(x_0) \cap \{u=0\}| \geq  K r^{d(\alpha_2/\alpha_1 - 1/q_-)q_+/(q_+-1)}.\]

Finally, in the case $q_{+} = q_{-}$,  we obtain $|B_r(x_0)\cap \{u=0\}| \geq K r^d$, as in the classical theory.
\end{proof}

\section{An example of a free boundary point with infinite jump} \label{sct example}

In this final section, we discuss an example that shows the free boundary can indeed intersect the infinite set of $\varphi$. 

For $d\geq 3$ fixed, let $m>0$ be a constant to be chosen later, and define
\[ 
\tau(r) := \frac {m(d-2)}{r - r^{d-1}}, \quad r\in (0,1).
\]
The function $\tau$ arises in the analysis of radially symmetric harmonic functions, which are involved in our proof below. The minimum of $\tau$ is achieved at 
\[
 r_* = \left(\frac 1 {d-1}\right)^{\frac 1 {d-2}} \in (0,1),
 \]
 and $\tau(r_*) = \left(m r_*^{d-1}\right)^2$. Next, let $r^* = \dfrac {1+r_*} 2$, and for fixed $q>1$, define $\varphi: B_1 \to (0,+\infty]$ by
\[ 
\varphi(x) = 
\begin{cases} 
\left(m r_*^{d-1}\right)^2 , & |x| <r_* ,\\ 
\left( |x|-r_*\right)^{-1/q}, & r_* \leq |x|< r^*,\\
\left(m r_*^{d-1}\right)^2, &r^*\leq |x|\leq 1,
\end{cases}
\]
and note that $\varphi \in L^{q}_{\text weak}(B_1)$. We consider the minimization problem for 
\[
 \mathcal J_{I,\varphi}(v) = \int_{B_1} (|\nabla v|^2 + \varphi(x) 1_{\{v>0\}}) \dd x, \quad v\in H^1_g(B_1),
\]
with $g\equiv m$ on $\partial B_1$ for some constant $m$. We claim that for $m>0$ sufficiently small, depending only on $d$ and $q$, there is a minimizer $u$ such that $\partial\{u>0\}$ intersects $\{\varphi(x) = \infty\}$.
 
 {Since $\varphi$ is not a monotonic function of $|x|$, we cannot apply rearrangement methods to conclude minimizers of $\mathcal J_{I,\varphi}$ are radially symmetric. In fact, we will show that a non-symmetric minimizer exists for certain choices of $m$. }
 
 { On a technical note, our argument below assumes $u$ is differentiable at free boundary points where $\varphi < \infty$. This is not always true in the pointwise sense, but by understanding solutions to the Bernoulli problem in the viscosity sense, our argument (which uses nothing more than the comparison principle) can be made rigorous (see, {for instance}, \cite[Section 6]{MT} for a detailed discussion of the meaning of the free boundary condition $|\nabla u|^2 = \varphi$ in the viscosity sense). We omit the details about this issue because it concerns the boundary condition at points where $\varphi$ is finite, which is suitably explained by existing theory.}

First, we define a useful class of comparison functions: radially symmetric functions that are zero in $B_{r}$ for some $r\in (0,1)$, and harmonic in $B_1\setminus B_{r}$, with boundary values equal to $m$ on $\partial B_1$. Explicitly, these functions are given by
\begin{equation}\label{e:r}
u_r(x) = 
\begin{cases}
0, & |x|< r,\\
\frac m {r^{2-d}-1} (r^{2-d} - |x|^{2-d}), & |x|\geq r,
\end{cases}
\end{equation}
and they have energy
\[
\begin{split}
\mathcal J_{I,\varphi}(u_r) &=  \omega_d\int_{r}^1 \left(\frac{ m^2 (d-2)^2}{(r^{2-d} - 1)^2} \rho^{2-2d} + \varphi(\rho)\right) \rho^{d-1} \dd \rho\\
&= \omega_d\left(\frac {m^2 (d-2)}{r^{2-d}-1} + \int_r^1 \varphi(\rho)\rho^{d-1} \dd \rho\right),
\end{split}
\]
where $\omega_d$ is the measure of $\mathbb S^{d-1}$, and we have written $\varphi(\rho) = \varphi(|x|)$. For any $r\in (0,1)$, the function $u_r$ is admissible for the minimization problem.

Now, let $u$ be a minimizer. We claim that $\{u=0\}$ cannot be empty, if $m$ is chosen sufficiently small. Indeed, if $u$ is positive in all of $B_1$, then it is harmonic in $B_1$ and must be identically equal to $m$. Then $\mathcal J_{I,\varphi}(u) = \int_{B_1} \varphi \dd x$.  To rule out this case, we would like to find $r\in (0,1)$ such that $u_r$ defined by \eqref{e:r} has energy less than $\int_{B_1}\varphi \dd x$, or in other words,
\begin{equation}\label{e:m-ineq}
 m^2 \frac {d-2}{r^{2-d}- 1} < \int_0^{r} \varphi(\rho) \rho^{d-1} \dd \rho.
\end{equation}
Choosing $r = r^*$, we see that 
\[
\int_0^{r^*} \varphi(\rho) \rho^{d-1} \dd \rho \geq \int_{r_*}^{r^*} \varphi(\rho) \rho^{d-1} \dd \rho = \int_{r_*}^{r^*} (\rho - r_*)^{-1/q} \rho^{d-1} \dd \rho.
\]
Therefore, by choosing $m>0$ small enough that
\[ 
m^2 < \frac{(r^*)^{2-d} - 1}{d-2} \int_{r_*}^{r^*} (\rho-r_*)^{-1/q} \rho^{d-1} \dd \rho,
\]
we ensure \eqref{e:m-ineq} is satisfied when $r=r^*$,  so $u\equiv m$ cannot be a minimizer. Note that such $m$ can be chosen depending only on $d$ and $q$.

Now, since $\{u=0\}$ is not empty, let $x_1$ and $x_2$ be the points on $\partial \{u=0\}$ of smallest and largest magnitude, respectively. Define $r_1 = |x_1|$ and $r_2 = |x_2|$. For $i=1,2$, let $u_i:=u_{r_i}$ denote the function defined in \eqref{e:r} with $r=r_i$. 

By our choice of $x_1$ and $x_2$, we have $u_1\geq u \geq u_2$ on $\partial \{u>0\}$. The comparison principle implies $u_1 \geq u \geq u_2$ in all of $B_1$, and 
\begin{equation}\label{e:r1r2}
\partial_r u_1(x_1) \geq \partial_r u(x_1), \quad \partial_r u_2(x_2) \leq \partial_r u(x_2),
\end{equation}
where $\partial_r$ is the derivative in the radial direction. These two inequalities will imply useful bounds on $r_1$ and $r_2$.

Starting with $r_2$, the definition of $u_2$ implies, with \eqref{e:r1r2},
\[
\partial_r u(x_2) \geq \frac{(d-2) m} { r_2 - r_2^{d-1}} = \tau(r_2).
\]
On the other hand, since $u$ is a minimizer, we have 
$\partial_r u(x_2) = \sqrt{\varphi(x_2)}$,  so that 
\begin{equation}\label{e:r2-ineq}
\sqrt{\varphi(x_2)} \geq \tau(r_2).
\end{equation}
This implies $r_2 = |x_2|$ must lie in the part of $[0,1]$ where $\sqrt\varphi \geq \tau$. Choosing $m>0$ smaller if necessary (depending only on $d$ and $q$) we can ensure that 
\[
\lim_{r\to r^*-} \varphi(r) = (r^*-r_*)^{-1/q} > \frac{(d-2)m}{r^* - (r^*)^{d-1}} = \tau(r^*).
\]
Since $\tau$ is increasing on $(r_*,1)$, we also have 
\[\tau(r^*) >  \tau(r_*) = \lim_{r\to r^*+} \varphi(r).\]
Therefore, the inequality $\sqrt{\varphi(x_2)} \geq \tau(r_2)$ implies $r_2 \in [r_*,r^*]$.

Regarding $r_1$, we similarly have from \eqref{e:r1r2} that
\begin{equation}\label{e:r2eqn}
\tau(r_1) =  \frac {(d-2) m} {r_1 - r_1^{d-1}} \geq \partial_r u(x_1) = \sqrt{\varphi(x_1)}
\end{equation}
Since $\sqrt{\varphi(r)} > \tau(r)$ for $r\in (r_*,r^*)$, inequality \eqref{e:r2eqn} implies $r_1 \in [0,r_*]\cup[r_*,1]$. Since $r_1\leq r_2$ by definition and $r_2 \in [r_*,r^*]$, we in fact have $r_1\in [0,r_*]\cup\{r^*\}$. 

Next, we would like to rule out the case $r_1 = r_2 = r^*$. In this case, the inequalities $u_1 \geq u\geq u_2$ imply $u_1 = u = u_2$, and therefore $u$ is given by \eqref{e:r} with $r = r^*$.  In particular, $\partial_r u(x_2) = \tau(r^*)$. But in the set $\{u>0\} = B_1\setminus B_{r^*}$, there holds $\varphi \equiv \left(m r_*^{d-1}\right)^2$, so one should have $\partial_r u(x_2) = mr_*^{d-1} = \tau(r_*)< \tau(r^*)$ for any solution of the Bernoulli problem, which is a contradiction. We conclude 
\[
r_1 \leq r_*\leq r_2.
\]

Since  $u_1 \geq u\geq u_2$, we clearly have $B_{r_1} \subset \{u=0\} \subset B_{r_2}$, and with $r_1 \leq r_* \leq r_2$, this implies the existence of at least one point $x_0\in\partial\{u>0\}$ with $|x_0|=r_*$, and such that $\{\varphi(x) = +\infty\} = \partial B_{r_*}$ intersects $\{u>0\}\cap B_\rho(x_0)$ for any $\rho>0$, as claimed. 

{ For any $x_0 \in \partial B_{r_*}$, there is a cone $\Xi$ with vertex at $x_0$ and aperture $\pi$ (i.e. $\Xi$ is a half-plane), such that 
\[
|\{\varphi(x) > t\}\cap B_r(x_0)\cap \Xi| \geq \min\left(c r^{d-1} t^{-q}, |B_r(x_0)\cap \Xi|\right),
\] 
for $r>0$ sufficiently small. 
From Lemma \ref{l:nondegeneracy2} with $p=q$ and $\sigma = d-1$, we have $u(x) \geq C|x-x_0|^{1-1/(2q)}$ for $x\in \Xi$ near $x_0$. We conclude $|\nabla u(x)|\to \infty$ as $x$ approaches $x_0$ from inside $\Xi \cap \{u>0\}$. }

\bibliographystyle{amsplain, amsalpha}

\begin{thebibliography}{60}


\bibitem{AC} Alt, H.; Caffarelli, L. 
{\it Existence and regularity for a minimum problem with free boundary. }
J. Reine Angew. Math. 325 (1981), 105--144. 

\bibitem{ACF} Alt, H.; Caffarelli, L.; Friedman, A. 
{\it Variational problems with two phases and their free boundaries.}
Trans. Amer. Math. Soc. 282 (1984), no. 2, 431--461.

\bibitem{ACF1984quasilinear}  Alt, Hans Wilhelm; Caffarelli, Luis A.; Friedman, Avner 
{\it A free boundary problem for quasilinear elliptic equations. }
Ann. Scuola Norm. Sup. Pisa Cl. Sci. (4) 11 (1984), no. 1, 1--44. 

\bibitem{BT} Baldi, Pietro; Toland, John F. 
{\it Steady periodic water waves under nonlinear elastic membranes.}  
J. Reine Angew. Math. 652 (2011), 67--112. 

\bibitem{BM}  Beck, Lisa; Mingione, Giuseppe  
{\it Lipschitz bounds and nonuniform ellipticity. }
Comm. Pure Appl. Math. 73 (2020), no. 5, 944--1034. 


\bibitem{CJK} Caffarelli, Luis A.; Jerison, David; Kenig, Carlos E. 
{\it Some new monotonicity theorems with applications to free boundary problems. }
Ann. of Math. (2) 155 (2002), no. 2, 369--404. 
 
\bibitem{CSY} Caffarelli, Luis A.; Shahgholian, Henrik; Yeressian, Karen {\it A minimization problem with free boundary related to a cooperative system.}  Duke Math. J. 167 (2018), no. 10, 1825--1882. 

\bibitem{CKL}  Canic, Suncica; Keyfitz, Barbara Lee; Lieberman, Gary M. {\it A proof of existence of perturbed steady transonic shocks via a free boundary problem.} Comm. Pure Appl. Math. 53 (2000), no. 4, 484--511.

\bibitem{CF} Chen, Gui-Qiang; Feldman, Mikhail 
{\it Multidimensional transonic shocks and free boundary problems for nonlinear equations of mixed type. }
J. Amer. Math. Soc. 16 (2003), no. 3, 461--494. 

\bibitem{DeG} De Giorgi, Ennio
{\it Sulla differenziabilita e l'analiticita delle estremali degli integrali multipli regolari. }
Mem. Accad. Sci. Torino. Cl. Sci. Fis. Mat. Nat. (3) 3 1957 25--43. 

\bibitem{DePhSV} De Philippis, Guido; Spolaor, Luca; Velichkov, Bozhidar {\it Regularity of the free boundary for the two-phase Bernoulli problem.} Invent. Math. 225 (2021), no. 2, 347--394. 

\bibitem{DePhM}  De Philippis, G.; Maggi, F. {\it Regularity of free boundaries in anisotropic capillarity problems and the validity of Young's law. } Arch. Ration. Mech. Anal. 216 (2015), no. 2, 473--568.

\bibitem{F} Feldman, William M. {\it Limit shapes of local minimizers for the Alt-Caffarelli energy functional in inhomogeneous media. } Arch. Ration. Mech. Anal. 240 (2021), no. 3, 1255--1322.

\bibitem{gilbargtrudinger}  Gilbarg, David; Trudinger, Neil S. {\it Elliptic partial differential equations of second order. } Reprint of the 1998 edition. Classics in Mathematics. Springer-Verlag, Berlin, 2001. xiv+517 pp. ISBN: 3-540-41160-7.

\bibitem{potential-theory-book} Heinonen, Juha; Kilpel\"ainen, Tero; Martio, Olli  
{\it Nonlinear potential theory of degenerate elliptic equations. }
Unabridged republication of the 1993 original. Dover Publications, Inc., Mineola, NY, 2006. xii+404 pp. ISBN: 0-486-45050-3 

\bibitem{KT} Kenig, Carlos E.; Toro, Tatiana  
{\it Free boundary regularity for harmonic measures and Poisson kernels.} 
Ann. of Math. (2) 150 (1999), no. 2, 369--454. 

\bibitem{KT2} Kenig, Carlos; Toro, Tatiana  
{\it Free boundary regularity below the continuous threshold: 2-phase problems.}  
J. Reine Angew. Math. 596 (2006), 1--44. 

\bibitem{KL1} Kriventsov, Dennis; Lin, Fanghua {\it Regularity for shape optimizers: the degenerate case.}  Comm. Pure Appl. Math. 72 (2019), no. 8, 1678--1721.

 \bibitem{KL2} Kriventsov, Dennis; Lin, Fanghua {\it Regularity for shape optimizers: the nondegenerate case.} Comm. Pure Appl. Math. 71 (2018), no. 8, 1535--1596.

\bibitem{LSW} Littman, W.; Stampacchia, G.; Weinberger, H. F. {\it Regular points for elliptic equations with discontinuous coefficients.} Ann. Scuola Norm. Sup. Pisa Cl. Sci. (3) 17 (1963), 43--77.

\bibitem{M1} McCurdy, S. {\it One-phase free-boundary problems with degeneracy.} Preprint (2020),  arXiv:2010.06726.

\bibitem{M2} McCurdy, S. {\it Conditions for eliminating cusps in one-phase free-boundary problems with degeneracy.} Preprint (2021),  arXiv:2111.03150.

\bibitem{MN} McCurdy, S.; Naples, L. {\it Non-existence of cusps for degenerate Alt-Caffarelli functionals.} Preprint (2022),  arXiv:2202.00616.

\bibitem{MT} Moreira, Diego R.; Teixeira, Eduardo V. {\it A singular perturbation free boundary problem for elliptic equations in divergence form.} Calc. Var. Partial Differential Equations 29 (2007), no. 2, 161--190.

\bibitem{Moser} Moser, J.  {\it On Harnack's theorem for elliptic differential equations.} Comm. Pure Appl. Math. 14 (1961), 577--591. 

\bibitem{Nash} Nash, J.
{\it Continuity of solutions of parabolic and elliptic equations. }
Amer. J. Math. 80 (1958), 931--954. 

\bibitem{serrin1964} Serrin, James
{\it Local behavior of solutions of quasi-linear equations. }
Acta Math. 111 (1964), 247--302. 


\bibitem{Teix}  Teixeira, Eduardo V. {\it Sharp regularity for general Poisson equations with borderline sources.} J. Math. Pures Appl. (9) 99 (2013), no. 2, 150--164.

\bibitem{Teix2} Teixeira, Eduardo V.  {\it Universal moduli of continuity for solutions to fully nonlinear elliptic equations.} Arch. Rational Mech. Anal. 211 (2014), no 3, 911--927.

\bibitem{Teix3}  Teixeira, Eduardo V. {\it Regularity for quasilinear equations on degenerate singular sets.} Math. Ann. 358 (2014), no. 1-2, 241--256. 

\bibitem{V} Valdinoci, Enrico {\it Plane-like minimizers in periodic media: jet flows and Ginzburg-Landau-type functionals.} J. Reine Angew. Math. 574 (2004), 147--185.

\bibitem{VW} Varvaruca, E.; Weiss, G. {\it  A geometric approach to generalized Stokes conjectures.}   
Acta Math. 206 (2011), no. 2, 363--403.

\end{thebibliography}

\end{document}